\documentclass[a4paper,11pt]{amsart}
\usepackage{graphicx} 
\usepackage{amsmath,amssymb,amsthm} 
\usepackage{tikz-cd}
\usepackage{mathtools}
\usepackage{enumitem}
\usepackage{hyperref}
\newtheorem{prop}{Proposition}
\newtheorem{theorem}{Theorem}
\newtheorem{corollary}{Corollary}
\newtheorem{lemma}{Lemma}
\newtheorem{remark}{Remark}

\DeclareMathOperator{\HF}{\text{HF}}

\theoremstyle{plain}
\newcommand{\thistheoremname}{}
\newtheorem*{genericthm*}{\thistheoremname}
\newenvironment{namedthm*}[1]
  {\renewcommand{\thistheoremname}{#1}%
   \begin{genericthm*}}
  {\end{genericthm*}}

\title[Independence of generic forms and the Fr\"oberg conjecture]{Independence of generic forms and the Fr\"oberg conjecture}
\author{Mats Boij}
\address{Department of Mathematics, KTH - Royal Institute of Technology,  SE-100 44 Stockholm, Sweden}
\email{boij@kth.se}
\author{Eric Dannetun}
\address{Department of Mathematics, Stockholm University, SE-106 91 Stockholm, 
Sweden}
\email{eric.dannetun@math.su.se}
\author{Samuel Lundqvist}
\address{Department of Mathematics, Stockholm University, SE-106 91 Stockholm, 
Sweden}
\email{samuel@math.su.se}

\date{}

\newcommand{\compBound}{2100}

\begin{document}

\begin{abstract}
We show that the Fr\"oberg conjecture holds in the second non-trivial degree for an ideal generated by generic forms of degree $d>2$. We also show that the conjecture is true up to degree $2d-1$ provided that the number of variables is sufficiently large.
\end{abstract}

\maketitle

\section{Introduction}

Let $k$ be an infinite field and let $R=k[x_1,\ldots,x_n]$. Let $I$ be an ideal generated by $r$ generic forms of degrees $d_1,\ldots, d_r$. Fr\"oberg 
has conjectured that the Hilbert series of $R/I$ equals
\begin{equation} \label{eq:froberg}
\left[ \frac{\prod_{i=1}^r (1-t^{d_i})}{(1-t)^n} 
\right],
\end{equation}
where the brackets notation means truncate at the first non-positive coefficient. 

The conjecture 
is trivially true for $r\leq n$, settled for $n=2$ by Fr\"oberg \cite{Froberg1985}, for $n=3$ by Anick \cite{ANICK1986235}, and for $r=n+1$ independently by Stanley  \cite{Stanley1980WeylGT} and Watanabe \cite{WATANABE1989194}. 

In addition to the results for a specific number of variables or a specific number of forms, there are also results on specific coefficients in the series for the equigenerated case $d= d_1= \cdots = d_r$. By the important partial result of Hochster and Laksov \cite{Hochster01011987}, there are no unexpected linear syzygies of ideals generated by forms of degree $d\geq 2$, meaning that the Fr\"oberg conjecture is true in degree $d+1.$ 
Moreover, Aubry \cite{Aubry1995392}, Migliore and Mir\'o-Roig \cite{MIGLIORE200379}, and Nenashev \cite{NENASHEV2017272} have different criteria for when the conjectured series is correct in degree $d+d'$, for $d'>1$.

In this paper, we restrict ourselves to the case $2 \leq d'<d$. The
coefficient of $t^{d+d'}$ in Fr\"oberg's conjecture in the equigenerated case then equals the coefficient of $t^{d+d'}$ in  
\[
\left[ \frac{1-rt^d}{(1-t)^n} \right],
\]
that is, is equal to \[\max ( \dim_k R_{d+d'}-r \dim_k R_{d'}, 0).\]
 
This means that $\{f_i m_j\}$ is linearly independent or spans  $R_{d+d'}$, where $\{m_j\}$ is a basis for $R_{d'}.$

Our main result is that the Fr\"oberg conjecture is true in the second non-trivial degree, that is, that the coefficient of $t^{d+2}$ is the conjectured one. 

\begin{theorem} \label{thm:d2} Let $k$ be an infinite field and let $d>2$. Then, for any $r$, there is an ideal $I$ generated by $r$ forms of degree $d$ such that $k[x_1,\dots,x_n]/I$ has the Hilbert series conjectured by Fr\"oberg up to degree $d+2$, that is, the Fr\"oberg conjecture is true up 
to degree $d+2$. 
\end{theorem}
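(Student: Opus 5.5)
Since the Hilbert function in degrees $\le d+1$ is already settled (trivially below $d$, and in degree $d+1$ by Hochster--Laksov), it remains to handle degree $d+2$. There the conjectured coefficient is $\max(\dim R_{d+2}-r\dim R_2,0)$, because $d>2$ makes the $t^{2d}$-term irrelevant. By semicontinuity it suffices, for each $r$, to exhibit one choice of $r$ forms $f_1,\dots,f_r$ of degree $d$ for which the products $\{f_i m\}$, with $m$ running over the monomials of degree $2$, are either linearly independent or span $R_{d+2}$. Both properties are open conditions and pass to subsets or supersets respectively. So the problem reduces to the critical value $r_0=\lfloor \dim R_{d+2}/\dim R_2\rfloor$ (independence) together with $r_0+1$ (spanning). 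Equivalently, by Macaulay duality, we need the $r_0+1$ generic forms $f_i$ to impose independent-or-spanning conditions. Dually: for the right number $s$ of forms, the only degree-$(d+2)$ forms $F$ with all second partials $\partial^2 F/\partial x_j\partial x_k$ annihilated by each $f_i$ are $F=0$, in the spanning range.

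I would reformulate this as a statement about linear syzygies of order two. A relation $\sum_i q_i f_i=0$ with $q_i\in R_2$ is what we must rule out in the independence range. The plan is an induction on $n$ modeled on Hochster--Laksov. Restrict to a hyperplane $x_n=0$ and use the exact sequence $0\to R(-1)\xrightarrow{x_n} R\to \bar R\to 0$. Choose the $f_i$ in a special form: some of them are $x_n g_i$ with $g_i$ generic of degree $d-1$ in $n$ variables, and the rest are generic forms in $\bar R=k[x_1,\dots,x_{n-1}]$. A relation $\sum q_i f_i=0$ reduced mod $x_n$ gives a relation among the $\bar f_i$ in $\bar R$. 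By induction on $n$ (the same statement in one fewer variable, at the appropriate count) this forces the reduced coefficients to vanish. That makes each $q_i$ divisible by $x_n$, so $q_i=x_n\ell_i$, and we are left with a linear relation $\sum \ell_i f_i+\dots=0$ in degree $d+1$. This linear relation is then killed by Hochster--Laksov, namely the absence of unexpected linear syzygies. The numerology requires splitting $r=r'+r''$ so that $r''\dim \bar R_2$ and $r'\dim R_1$ fit the dimensions $\dim\bar R_{d+2}$ and $\dim R_{d+1}$ in a compatible way, using $\dim R_{d+2}=\dim \bar R_{d+2}+\dim R_{d+1}$.

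The spanning case is handled the same way, with surjectivity replacing injectivity in the long exact sequence. The base cases are $n\le 3$, which are known by Fr\"oberg and Anick, and $r\le n+1$, which is known by Stanley and Watanabe.

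The main obstacle is the numerology. The two dimension counts cannot in general be matched exactly by integers $r',r''$, since they are floors of different quotients. So one sector will be in the independence regime while the other is slightly past it, and the mixed ranks do not simply add. To fix this, I would first try allowing a few ``mixed'' generators of the form $x_n g+h$ with $h\in\bar R_d$. I would then check by a direct rank argument, for example a leading-term or monomial-ordering argument, that these absorb the defect. If that fails, the fallback is to handle the boundary cases with an explicit monomial-plus-binomial choice and a Gröbner-type count, using $d>2$ to guarantee enough room that the quadratic multiples of distinct generators do not collide.
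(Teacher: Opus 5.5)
Your inductive step does not reduce to Hochster--Laksov. Reducing $\sum_i q_if_i=0$ modulo $x_n$ only sees the generators lying in $\bar R$, so the induction hypothesis makes only \emph{their} coefficients divisible by $x_n$. The quadrics $q_i$ multiplying the generators $x_ng_i$ remain unconstrained. After dividing by $x_n$ you are left with
\[
\sum_{i\le r''}\ell_i f_i+\sum_{i>r''}q_i g_i=0\ \text{ in } R_{d+1},\qquad \ell_i\in R_1,\ q_i\in R_2,\ g_i\in R_{d-1}.
\]
This is not a linear syzygy. It is a quadratic syzygy of the degree-$(d-1)$ forms $g_i$, i.e.\ your original problem one degree lower, in degree $(d-1)+2$. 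It is also coupled with linear syzygies of the forms $f_i\in\bar R_d$, which are not generic in $R$. Hochster--Laksov says nothing about such relations. Correspondingly, your count ``$r'\dim R_1$ against $\dim R_{d+1}$'' should read $r''\dim R_1+r'\dim R_2\le\dim R_{d+1}$.

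For $d=3$ the chosen shape is actually fatal. The $g_i$ are then quadrics, so $g_j\cdot(x_ng_i)-g_i\cdot(x_ng_j)=0$ is a nonzero relation with quadric coefficients as soon as $r'\ge 2$. The counts force $r'\ge2$ at the critical $r$ for every $n\ge4$. For instance, take $n=4$, $r=5$: independence is expected, since $5\dim R_2=50\le 56=\dim R_5$. But $r''\ge 4$ already gives a relation inside $\bar R$, because $4\cdot 6>21=\dim\bar R_5$. Hence $r''\le 3$, so $r'\ge 2$, and your forms are dependent.

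Beyond this, the integer mismatch at the critical $r$ that you flag is exactly where the content of the theorem lies. Your proposed remedies (mixed generators $x_ng+h$, a Gr\"obner-type count) are not carried out, so no proof results.

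The paper avoids induction on $n$ altogether. It generalizes the Hochster--Laksov incidence-variety dimension count to relations $(a_1,\dots,a_r)\in R_2^r$, stratified by rank. It bounds the fiber dimension over each stratum using Macaulay's theorem in Eliahou's form. This reduces everything to $g_{d,2}(x)\le g_{d,2}(n-1)$ on $[0,n-1]$, which follows from $g_{d,2}$ having at most one sign change once $\dim R_{d+2}\ge(\dim R_2)^2$. The finitely many remaining cases ($d=3$, $n\le 21$; $d=4$, $n\le 5$) are verified by computer. There, a variable-splitting lemma is used only where $\dim R'_2$ divides $\dim R'_{d+2}$ exactly, precisely to sidestep the mismatch you ran into.
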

Recall that to verify the conjecture for a fixed set of data, it suffices to check one example, as the expected behavior is an open condition.

In particular, Theorem \ref{thm:d2} gives that we know that the Fr\"oberg conjecture holds when $d>2$ and $r\dim_k R_d\ge \dim_k R_{d+2}$, i.e., when 
\[
r\ge \frac{\dim_k R_{d+2}}{\dim_k R_d} = \frac{(n+1+d)(n+d)}{(d+2)(d+1)}.
\]
We are also able to give the following asymptotic result.  
\begin{theorem} \label{thm:asymto}
    Let $k$ be an infinite field, and fix $d>1$. Then there exists an $N$ such that for any $n \ge N$, and any $r$, there is an ideal $I$ generated by $r$ forms of degree $d$ such that $k[x_1,\dots,x_n]/I$ has the Hilbert series conjectured by Fr\"oberg up to degree $2d-1$, that is, the Fr\"oberg conjecture is true up 
to degree $2d-1$.
\end{theorem}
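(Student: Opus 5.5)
Throughout, $d$ is fixed, $d'\le d-1$, and $\{m_j\}$ is a basis of $R_{d'}$. In degrees $d+d'$ the Fröberg prediction is $\max(\dim_k R_{d+d'}-r\dim_k R_{d'},0)$, and the expected behaviour is an open condition. So it suffices, for each $d'$ with $1\le d'\le d-1$, to produce one configuration of $r$ forms for which the products $\{f_i m_j\}$ are either linearly independent or span $R_{d+d'}$. Here $d'=1$ is Hochster–Laksov. Independence for $r$ forms implies independence for fewer forms, and spanning for $r$ implies spanning for more. Hence it is enough to treat the critical value
\[
r_0=\Big\lfloor \dim_k R_{d+d'}/\dim_k R_{d'}\Big\rfloor ,
\]
proving independence for $r_0$ forms and spanning for $r_0+1$ forms, for all $n\ge N$.

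\textbf{Dual formulation.} I would pass to the Macaulay inverse system. The products $f_i R_{d'}$ fail to be independent exactly when there is a nonzero tuple $(g_1,\dots,g_r)\in R_{d'}^r$ with $\sum g_if_i=0$, a syzygy of degree $d'$. The $f_i$ fail to span $R_{d+d'}$ exactly when some nonzero $F\in \mathcal D_{d+d'}$ satisfies $f_i\circ F=0$ for all $i$. For generic forms, independence follows from a degeneration to forms with no degree-$d'$ syzygies. A natural degeneration is $f_i=\ell_i^d$, powers of generic linear forms. Then the question becomes the Alexander–Hirschowitz-type question of whether $r$ general fat-point-like conditions impose independent conditions in degree $d+d'$. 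Equivalently, by apolarity, it asks whether the space $\langle \ell_i^{d}R_{d'}\rangle$ has the expected dimension. This is the dimension of a secant-type variety of tangential/osculating data.

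\textbf{Using large $n$.} Because $d'<d$, the Fröberg numerology in degree $d+d'$ only involves the term $-rt^d$, so the expected dimension is the naive count. For $n$ large, $r_0\approx n^{d}\,d'!/(d+d')!$ grows, while the number of "obstruction types" is bounded in terms of $d$ only. I would use a known result on the dimension of spaces $\langle \ell_i^{d}R_{d'}\rangle$ for general points. These relate to the nondefectivity of higher osculating/secant varieties of Veronese varieties, which holds when $n$ is large relative to $d$; this is the asymptotic Alexander–Hirschowitz results of Ballico/Brambilla–Ott type. The plan is to invoke such a statement, or prove it by induction on $n$ via a Horace-style specialization. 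In that specialization one places some of the $\ell_i$ in a hyperplane $x_n=0$ and splits the count into restriction and residual. The residual is a problem in degree $d+d'-1$ with order reduced. The restriction is the same problem in $n-1$ variables.

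\textbf{Main obstacle.} The main difficulty is making the Horace induction close. The numbers of forms placed on the hyperplane must be chosen integrally so that both the trace and the residual problems are in the independent-or-spanning regime. Rounding defects are controlled only when $n$ is large, which is where $N$ comes from. I would handle the rounding by allowing a few forms to be general forms of degree $d$ rather than pure powers, using a differential Horace or "sliding" argument, and absorb the finitely many remaining cases into the bound $N$. Once each $d'\le d-1$ is handled for $n\ge N_{d'}$, set $N=\max_{d'} N_{d'}$. Together with the open condition, this gives the Hilbert series up to degree $2d-1$.
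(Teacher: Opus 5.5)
\textbf{There is a genuine gap: the key step is assumed, not proved.} Your reduction to the critical number $r_0$ is fine, and so is the apolarity translation: in characteristic $0$, $\langle \ell_1^dR_{d'},\dots,\ell_r^dR_{d'}\rangle^\perp$ is the degree-$(d+d')$ part of the ideal of $r$ general $(d'+1)$-fold points of $\mathbb P^{n-1}$. After that, however, the entire content of the theorem sits in a claim you do not prove. The claim is that such fat points impose independent conditions on forms of degree $d+d'$, or leave none, for every $n\ge N$ and every $d'\le d-1$. You cannot simply invoke this. Alexander--Hirschowitz covers only double points, i.e.\ $d'=1$. The asymptotic vanishing results I know of for higher multiplicities are asymptotic in the degree for fixed ambient dimension, not in the dimension for fixed degree and multiplicity.

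\textbf{The specialization to pure powers can fail where generic forms succeed.} It is strictly stronger than what is needed. For $d=2$, $d'=1$, $n=5$, $r=7$, the spaces $\ell_i^2R_1$ span only a $34$-dimensional subspace of $R_3$; this is the Alexander--Hirschowitz exception of seven double points in $\mathbb P^4$. Seven generic quadrics, by contrast, do span $R_3$. Worse, the theorem is stated over an arbitrary infinite field. If $0<\mathrm{char}\,k=p\le d$, Frobenius makes $d$th powers degenerate, e.g.\ $\ell^p=\sum_j a_j^px_j^p$. In general all $\ell^d$ then lie in a fixed subspace of $R_d$ of dimension $O(n^{c})$ with $c<d$, namely $c$ is the sum of the base-$p$ digits of $d$. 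Hence $\sum_i\ell_i^dR_{d'}$ has dimension $O(n^{c+d'})$, far below $\dim_k R_{d+d'}$, and your degeneration fails for every large $n$.

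\textbf{The Horace fallback breaks exactly in the top case $d'=d-1$.} After you specialize some points to $H=\{x_n=0\}$, only those points drop multiplicity in the residual. The points off $H$ keep multiplicity $d$ while the degree drops to $2d-2$. Two such points already fail to impose independent conditions, since $\ell_1^{d-1}R_{d-1}\cap\ell_2^{d-1}R_{d-1}$ contains $\ell_1^{d-1}\ell_2^{d-1}\in R_{2d-2}$. If instead you place all but one point on $H$, the trace is overloaded: $(r_0-1)\binom{n+d'-2}{d'}$ exceeds $\binom{n+d+d'-2}{d+d'}$ for large $n$, since the difference grows like $n^{d+d'-1}$. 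You flag this as the ``main obstacle'' and mention differential Horace and mixing in non-power generators, but none of it is carried out. Generators that are not powers of linear forms also take you outside the fat-point setting in which Horace bookkeeping makes sense. So what you have is a research program rather than a proof.

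\textbf{What the paper does instead.} No special configuration is needed. The paper bounds the dimension of the incidence variety $V$ of pairs consisting of forms and a degree-$d'$ syzygy, controlling each fibre by Macaulay's bound in Eliahou's form. This reduces everything to the one-variable inequality $g_{d,d'}(x)\le g_{d,d'}(n-1)$ on $[0,n-1]$ (Theorem~\ref{thm: f-inequality}). For $d=d'+1$ all non-leading coefficients of $g_{d,d'}$ are negative (Proposition~\ref{prop: negative-coeff-d-1}), so by Descartes' rule the inequality holds once $n$ is large. Lemma~\ref{lemma: degree-increase1} propagates it to all $d>d'$, and one takes $N=\max_{d'<d}N(d')$. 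This argument is characteristic-free, which your degeneration cannot be.
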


The paper is organized as follows. In Section \ref{sec:aubry}, we first adapt the geometric argument used by Hochster and Laksov \cite{Hochster01011987} for the linear case to our case $d' \geq 2$, an adaption which is closely related to Aubry's  \cite{Aubry1995392}. Next, we use Eliahou's condensed version of Macaulay's theorem  \cite{Eliahou} to reduce our problem to the study of a univariate polynomial $g_{d,d'}$. 
In Section \ref{sec:lemmas} we study the polynomial $g_{d,d'}$ and show that if $\dim_k R_{d+d'} \geq \dim_k (R_{d'})^2$, then the Fr\"oberg conjecture holds up to degree $d+d'$ provided that  $g_{d,d'}$  has at most one sign-change in its coefficients.
In Section \ref{sec:quadrics} we give the proof of Theorem \ref{thm:d2},  where the results of Section \ref{sec:lemmas} are used to handle all but a finite number of cases. The remaining cases are proved by computer calculations.
In Section \ref{sec:higher} we show that $g_{d,d-1}$ has exactly one sign change and use this result to prove Theorem \ref{thm:asymto}. We also show that if $d' \leq \compBound$ and $\dim_k R_{d+d'} \geq \dim_k (R_{d'})^2$, then the Fr\"oberg conjecture holds up to degree $d+d'$.
In Section \ref{sec:discussions} we put our result in context with the existing literature and discuss both generalizations and limitations of our method.

\section{The Hochster-Laksov argument revisited} \label{sec:aubry}
Throughout the paper, we let $k$ denote an infinite field and $R = k[x_1,\ldots,x_n]$ the standard graded polynomial ring.

After choosing a $k$-basis for $R_d$, 
we  identify any $f\in R_d$
by its coefficients. 
Hence, we identify the space $R_d^r$
of $r$-tuples of forms of degree $d$, with the
space $\mathbb{A}^{r \times \dim_k R_d}$ as a space of $r \times \dim_k R_d$-matrices. As we interpret any element of $A \in \mathbb{A}^{r \times \dim R_d}$ as a matrix, we will refer to the rank of this matrix  as the rank of $A$.

Using the standard $k$-vector space structure on $\mathbb{A}^{r \times \dim R_d}$, the above interpretation is formally an isomorphism of vector spaces. Any vector subspace of $\mathbb{A}^{r \times \dim_k R_d}$ is an algebraic subset, and the topological (Krull) dimension agrees with the vector space dimension. Hence, we may speak of the dimension, which we denote $\dim_k$, of any subset of $\mathbb{A}^{r \times \dim_k}$ without ambiguity. 

Let $d$ and $d'<d$ be positive integers and let $\{m_i\}$ be a basis for $R_{d'}$. Recall that our main objective is to show that 
$ \{ f_i m_j \}$ is linearly independent or spans $R_{d+d'}$. In fact, we will do this by showing the stronger Property (\ref{gcase}).

Throughout the paper, let
$r=r(n,d,d')$ be the smallest integer such that 
\[r \dim_k R_{d'} \ge \dim_k R_{d+d'}\] and let \[s =s(n,d,d') =\dim_k R_{d+d'}- (r-1)\dim_k R_{d'}.\] 

Observe that $r-1$ and $s$ are essentially the quotient and remainder in the division of $\dim_k R_{d+'d}$ by $\dim_k R_{d'}$ and we have that  $1\leq s\leq \dim_k R_{d'}$. We will suppress the dependence on $n$, $d$ and $d'$ when they will be fixed. We would like to show that there exists $f_1, \dots, f_r \in R_d$ such that
\begin{equation} \label{gcase}
    \{f_im_j\}_{i<r} \cup \{f_rm_1, \dots, f_rm_s  \} \text{ is a $k$-basis for $R_{d+d'}$.}
\end{equation}

With our choice of basis $\{ m_i\}$ for $R_{d'}$, we identify $R_{d'}^r$ with $\mathbb A^{r\times \dim_k R_{d'}}$. We let $P$ be the linear subspace of the projective space $\mathbb P(R_{d'}^r)$ given by 
$$
P = \{p \in \mathbb{P}\left(R_{d'}^r\right) \mid p_{r,j}=0 \ \text{for} \ j> s \},
$$ which can be interpreted as the equivalence classes of $r$-tuples $(a_1,\dots,a_r)=\left(\sum p_{1,i}m_i,\dots,\sum p_{r,i}m_i\right)$ in $R_{d'}^{r}$ satisfying $a_r \in \text{span}\{m_1,\dots,m_s\}$, under projective equivalence. 

For any $F$ in $\mathbb A^{r\times \dim_k R_d}$ we will denote the corresponding $r$-tuple in $R_d^{r}$ by $(f_1,\dots,f_r)$, and for $A$ in $P$ we denote the corresponding (up to a non-zero constant) element $(a_1,\dots,a_r) \in R_{d'}^{r}$.

The main object we will study is the subvariety 
of $\mathbb A^{r\times \dim_k R_d}\times P$ defined as
\[
V = \{(F,A) \in \mathbb A^{r\times \dim_k R_d}\times P\mid  f_1a_1+ \dots + f_ra_r =0\}
\subseteq R_d^r\times \mathbb P\left(R_{d'}^r\right).
\] 
We indicate the dependence on $n,d,$ and $d'$, by writing $V(n,d,d')$, when needed.
The variety $V$ should be thought of as the locus where Property \eqref{gcase} does not hold. Indeed, $F=(f_1,\dots,f_r)$ lies in the image of the composition
$$
V \xhookrightarrow{} \mathbb A^{r\times \dim_k R_d}\times P \twoheadrightarrow \mathbb A^{r\times \dim_k R_d}
$$ if and only if there exists some $(a_1,\dots,a_r)$ in $P$ such that $f_1a_1+\dots+f_ra_r = 0$, which since $a_r \in \text{span}\{m_1,\dots,m_s\}$, shows that Property \eqref{gcase} does not hold. 

\begin{lemma}\label{lemma: dimension-ineq-implies-Fröberg}
Fix $d > d' > 0.$ Suppose that $\dim_k V < r\dim_k R_{d}$. Then, for any $r'$, there exists an ideal $(f_1,\dots,f_{r'}) \subset R$ with $\deg f_i = d$ that has the Hilbert function conjectured by Fr\"oberg in degree $d+d'$.
\end{lemma}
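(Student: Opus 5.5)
The plan is to show that generic $F$ satisfies Property \eqref{gcase}, and then to deduce the statement for every $r'$ by deleting or adding forms.

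\textbf{Step 1: Property \eqref{gcase} holds generically.} Consider the projection $\pi\colon V\to \mathbb A^{r\times \dim_k R_d}$. Since $P$ is projective, $\pi$ is a proper morphism, so $\pi(V)$ is closed. Its dimension is at most $\dim_k V<r\dim_k R_d$. Hence $\pi(V)$ is a proper closed subset, and its complement is a nonempty Zariski open set. Because $k$ is infinite, this open set has a $k$-point $F=(f_1,\dots,f_r)$.

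For this $F$ there is no nonzero $(a_1,\dots,a_r)$ with $a_r\in\operatorname{span}\{m_1,\dots,m_s\}$ and $\sum f_ia_i=0$. Therefore the $(r-1)\dim_k R_{d'}+s=\dim_k R_{d+d'}$ elements $\{f_im_j\}_{i<r}\cup\{f_rm_1,\dots,f_rm_s\}$ are linearly independent. Counting them, they form a basis of $R_{d+d'}$.

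\textbf{Step 2: fewer forms, $r'\le r-1$.} The products $f_im_j$ with $i\le r'$ form a subset of the basis from Step 1, so they are linearly independent. Hence $\dim_k (f_1,\dots,f_{r'})_{d+d'}=r'\dim_k R_{d'}$. This gives the expected value $\dim_k R_{d+d'}-r'\dim_k R_{d'}$, which is positive here.

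\textbf{Step 3: more forms, $r'\ge r$.} The products $f_im_j$ with $i\le r$ already span $R_{d+d'}$. For $r'>r$, take arbitrary additional forms $f_{r+1},\dots,f_{r'}$. Then $(f_1,\dots,f_{r'})_{d+d'}=R_{d+d'}$, and the Hilbert function in degree $d+d'$ is $0$. This equals $\max(\dim_k R_{d+d'}-r'\dim_k R_{d'},0)$, which is $0$ since $r'\dim_k R_{d'}\ge r\dim_k R_{d'}\ge\dim_k R_{d+d'}$.

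\textbf{Matching Fröberg's prediction.} By the introduction, for $d'<d$ the coefficient of $t^{d+d'}$ in Fröberg's series is $\max(\dim_k R_{d+d'}-r'\dim_k R_{d'},0)$. Two points need checking here.

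First, the truncation in \eqref{eq:froberg} must not have cut the series off at an earlier degree. For $e<d+d'$ the corresponding coefficients of $(1-r't^d)/(1-t)^n$ decrease in $r'$ as well. So if one of them were non-positive, then truncation forces the degree $d+d'$ coefficient to be $0$. In that case we are in the spanning regime, and Step 3 still gives the value $0$.

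Second, the actual Hilbert function can never fall below the Fröberg bound. Thus showing equality in degree $d+d'$ suffices, which is exactly what Steps 2 and 3 give.

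\textbf{Main obstacle.} The main point is in Step 1, namely that $\pi(V)$ is closed, which uses properness of the projection from $\mathbb A^{r\times\dim_k R_d}\times P$. If one prefers to avoid this, it is enough that the complement of $\pi(V)$ contains a nonempty open set. This follows from Chevalley's theorem, since $\pi(V)$ is constructible of dimension less than $r\dim_k R_d$, and that suffices.
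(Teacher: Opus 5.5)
Your proposal is correct and follows the paper's proof: the projection $V\to\mathbb A^{r\times\dim_k R_d}$ is not surjective, which gives an $F$ with Property \eqref{gcase}, and the cases $r'<r$ and $r'>r$ follow by taking subsets and supersets; your properness and infinite-field remarks make the paper's ``cannot be surjective'' step precise. The paper takes the truncation issue for granted (via the introduction), and your stated reason for it needs correcting: an earlier non-positive coefficient forces $r'\ge r$ because $\dim_k R_e/\dim_k R_{e-d}$ is decreasing in $e$, not because the coefficients decrease in $r'$.
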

\begin{proof} 
    If $\dim_k V < r\dim_k R_d$, then the projection onto the first coordinate, $V \to \mathbb A^{r\times \dim_k R_d}$, cannot be surjective. Hence there exists at least one set of forms $F = \{f_1,\dots,f_r\}$ such that $  \{f_im_j\}_{i<r} \cup \{f_rm_1, \dots, f_rm_s  \}$ is a basis for $R_{d+d'}$, which proves the statement for the case $r'=r.$

    Since $\{f_i m_j\}_{i<r}$ is linearly independent, this gives the claim for the case $r'<r$.

    The case $r'>r$ follows by the fact that if the ideal generated by $F$ spans $R_{d+d'}$ in degree $d+d'$, then so does the ideal generated by $F \cup \{f_{r+1},\ldots,f_{r'}\}.$
\end{proof}

We devote the rest of this section to try to understand the dimension of $V$. Following \cite{Hochster01011987}, we use the rank to partition the projective space $P$.

Let $P_t$ denote the locally closed subset of $P$ consisting of elements whose corresponding $r\times \dim_k R_{d'}$-matrix has rank $\dim_k R_{d'} -t$. Furthermore, we let $Y_t$ denote the locally closed subset of $P$ of elements having rank $\dim_k R_{d'}-t$ and having last row equal to zero. We also let $X_{t,h}$ denote the locally closed subset of $P$ represented by matrices whose last row is not zero, having rank $\dim_k R_{d'}-t$, and such that the last $\dim_k R_{d'}-s$ columns have rank $h$. Note that $h$ can take the values $0\le h \le \min\{ \dim_k R_{d'}-t-1, \dim_k R_{d'}-s\}$.
\begin{remark}\label{rmk:rankdim}
The entries of any $A \in P_t$ correspond to the coefficients with respect to a basis for $R_{d'}$ of the corresponding $(a_1,\dots,a_r) \in R_{d'}^{r}$. Hence, when viewing $(a_1,\dots,a_r)$ as and ideal in $R$, we have $\dim_k ((a_1\dots,a_r)_{d'}) = \dim_k R_{d'} -t$, and thus $\dim_k \left( (R/(a_1,\dots,a_r))_{d'} \right)=t$
\end{remark}

We now recall the following proposition. Note that the statements here are adjusted for $r \times \dim_k R_{d'}$-matrices, whereas \cite{Hochster01011987} only consider the case $d'=1$. 
\begin{prop}[{\cite[Proposition~2]{Hochster01011987}}] \label{prop: dimensions partition P} Given $d' >0,$ assume that $r>\dim_k R_{d'}$. Then the following  hold.
\begin{enumerate}[label=\alph*)]
    \item For each $t$, $0\le t \le (\dim_k R_{d'} -1)$, we have that $Y_t $ is irreducible, and
    $$
    \dim_k Y_t = (r+t-1)(\dim_k R_{d'}-t) -1.
    $$
    \item For each $t,h$ with $0\le t \le (\dim_k R_{d'}-1)$, $1\le h \le \min\{\dim_k R_{d'}-t-1,\dim_k R_{d'}-s  \}$, we have that $X_{t,h}$ is irreducible of dimension
    $$
    (r+s+t-\dim_k R_{d'})(\dim_k R_{d'} -t) - 1 + (2\dim_k R_{d'} -s-t-1)h - h^2.
    $$
    \item Let $m =\min\{\dim_k R_{d'}-t-1,\dim_k R_{d'}-s  \}.$ For a fixed $t$, $\dim_k X_{t,h}$ is maximized when $h = m$. Moreover, when $m\neq \dim_k R_{d'}-s$, it holds that $\dim_k X_{t,h} \le \dim_k Y_t$.
\end{enumerate}
\end{prop}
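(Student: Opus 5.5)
We set $N=\dim_k R_{d'}$ and work with $r\times N$ matrices of rank $N-t$. Nothing here uses the polynomial structure; the proposition is pure linear algebra about matrices with prescribed rank data. The plan is to realize each stratum as the image of a fibration over a Grassmannian, with fibers that are open subsets of affine spaces. Irreducibility then follows because the Grassmannian and the fibers are irreducible. The dimensions are computed as $\dim(\text{base})+\dim(\text{fiber})$, minus $1$ for projectivization.

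For a), $Y_t$ consists of $(r-1)\times N$ matrices (last row zero) of rank $N-t$, taken up to scalars. Such a matrix is determined by two pieces of data.
\begin{itemize}
\item Its row space $W\in \mathrm{Gr}(N-t,N)$, which has dimension $(N-t)t$.
\item A surjection $k^{r-1}\to W$, i.e.\ a full-rank $(r-1)\times(N-t)$ matrix. This is an open subset of $\mathbb A^{(r-1)(N-t)}$, nonempty since $r-1\ge N$.
\end{itemize}
The total space is an irreducible affine bundle over an irreducible base. Its dimension is $(N-t)(t+r-1)$, and projectivizing gives $(r+t-1)(N-t)-1$.

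For b), we split the columns into the first $s$ and the last $N-s$, and parametrize $X_{t,h}$ in three steps.
\begin{itemize}
\item First choose the row space $W$ of the full matrix, of dimension $N-t$.
\item Then choose the projection of $W$ onto the last $N-s$ coordinates. This is a subspace $U$ of dimension $h$, and the kernel $K=W\cap(k^s\times 0)$ has dimension $N-t-h$. The pairs $(W,U)$ with these dimensions form an irreducible locally closed stratum, a Schubert-type cell fibred over $\mathrm{Gr}(h,N-s)\times\mathrm{Gr}(N-t-h,s)$ with affine fibers.
\item Finally choose the rows. The first $r-1$ rows are arbitrary in $W$, and the last row lies in $K\setminus\{0\}$, because $a_r\in\mathrm{span}\{m_1,\dots,m_s\}$. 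Together they must span $W$, which is an open condition.
\end{itemize}
The dimension count is
\[
h(N-s-h)+(N-t-h)(s-N+t+h)+(N-t-h)h+(r-1)(N-t)+(N-t-h),
\]
where the third term is the extension data $\mathrm{Hom}$ from $W/K$ to $K$-complement choices. We then subtract $1$ and simplify to the stated quadratic in $h$. \emph{This bookkeeping is the main obstacle.} We would check it against the $d'=1$ computation of Hochster--Laksov and verify the count in small cases to pin down the correct fiber dimensions.

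For c), the formula from b) is a concave quadratic in $h$, $-h^2+(2N-s-t-1)h+\text{const}$, with vertex at $h=(2N-s-t-1)/2$. Both upper bounds on $h$ satisfy $m\le$ vertex. For $m=N-t-1$ this needs $N-t-1\le (2N-s-t-1)/2$, i.e.\ $s\le t+1$, which holds exactly when $N-t-1\le N-s$. For $m=N-s$ it needs $N-s\le (2N-s-t-1)/2$, i.e.\ $t+1\le s$. So the function is increasing on the allowed range, and the maximum is at $h=m$.

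When $m=N-t-1<N-s$, we substitute $h=N-t-1$ into the formula of b) and subtract it from $\dim Y_t$. The difference should reduce to $(N-s-h)\ge 0$ times a positive factor, which gives $\dim X_{t,h}\le\dim Y_t$. This is a direct algebraic simplification.
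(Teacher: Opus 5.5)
Your overall strategy is sound, and parts a) and the first half of c) are correct as written. The paper gives no proof of its own and only cites Hochster--Laksov, Proposition~2. That strategy is to record the row space $W$, its projection $U=\pi_2(W)$ and $K=W\cap(k^s\oplus 0)$, fibre over $\mathrm{Gr}(h,N-s)\times\mathrm{Gr}(N-t-h,s)$, and then choose rows.

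\textbf{The gap in b).} It sits exactly at the step you flagged, and it is a real error rather than bookkeeping. Given $U$ and $K$, the subspace $W/K\subset (k^s/K)\oplus U$ maps isomorphically onto $U$. So $W$ is the graph of a linear map $U\to k^s/K$. The fibre over $(U,K)$ is therefore $\mathrm{Hom}(U,k^s/K)$, of dimension $h(s-N+t+h)$. It is not $h(N-t-h)$, which would be $\mathrm{Hom}(U,K)$.

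Your sum overshoots by $h(2N-2t-2h-s)$. For example, with $N=3$, $s=1$, $t=0$, $h=2$, $r=4$ (realized by $n=3$, $d=2$, $d'=1$), your count gives $11$. This even exceeds $\dim P=(r-1)N+s-1=9$; in fact $X_{0,2}$ is open in $P$ and has dimension $9$.

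With the correct term, the second and third summands combine to $(N-t)(s-N+t+h)$. The count then becomes
\[
h(N-s-h)+(N-t)(s-N+t+h)+(r-1)(N-t)+(N-t-h)-1=(r+s+t-N)(N-t)-1+(2N-s-t-1)h-h^2,
\]
which is the claimed formula. Irreducibility follows as you say: the rows range over an open, nonempty subset of a vector bundle over an irreducible base (nonempty since $r-1\ge N$), and this maps injectively onto the affine cone over $X_{t,h}$.

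You should also note that $\mathrm{Gr}(N-t-h,s)$ is empty when $h<N-t-s$. In that range $X_{t,h}=\emptyset$, so b) only has content for $h\ge\max\{1,N-t-s\}$. Since $m\ge N-t-s$, this does not affect c) or its use in the paper.

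\textbf{The second half of c).} Your predicted outcome is also wrong there. Substituting $h=N-t-1$ gives
\[
\dim X_{t,m}=(r+s+t-N)(N-t)-1+(N-t-1)(N-s).
\]
Hence
\[
\dim Y_t-\dim X_{t,m}=(N-t)(N-s-1)-(N-t-1)(N-s)=t-s.
\]
This is not a positive multiple of $N-s-h=t-s+1$. It is nonnegative because $m\neq N-s$ means $N-t-1<N-s$, that is, $t\ge s$, and equality $\dim X_{t,m}=\dim Y_t$ actually occurs at $t=s$. So the claim holds, but only once you carry out the computation you deferred, and not in the form you anticipated.
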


We can now prove the following result, which is a variation of 
\cite[Corollary~3]{Hochster01011987}.

\begin{prop}\label{prop: dimension P_t}
Given $d>d' > 0,$ assume that $r>\dim_k R_{d'}$. For $t$, $0\le t\le (\dim_k R_{d'}-1)$, we have
$$
\dim_k P_t = \dim_k R_{d+d'} + t(\dim_k R_{d'} - t-r) +\max\{t-s,0\}-1.
$$
\end{prop}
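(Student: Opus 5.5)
The plan is to decompose $P_t$ into the rank strata introduced above and read off the dimension from Proposition~\ref{prop: dimensions partition P}. Every element of $P_t$ either has last row zero, and so lies in $Y_t$, or has nonzero last row, and then lies in exactly one $X_{t,h}$. Hence $P_t = Y_t \sqcup \bigsqcup_h X_{t,h}$ is a finite union of locally closed pieces, and
\[
\dim_k P_t=\max\Bigl\{\dim_k Y_t,\ \max_h \dim_k X_{t,h}\Bigr\}.
\]
By part c) of Proposition~\ref{prop: dimensions partition P}, the inner maximum is attained at $h=m=\min\{\dim_k R_{d'}-t-1,\dim_k R_{d'}-s\}$. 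So only two numbers need to be compared.

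Write $D=\dim_k R_{d'}$. By definition of $r$ and $s$ we have $\dim_k R_{d+d'}=(r-1)D+s$. First I expand the formula from part a):
\[
\dim_k Y_t=(r+t-1)(D-t)-1=\dim_k R_{d+d'}+t(D-t-r)+(t-s)-1 .
\]
Next I substitute $h=D-s$ into part b), putting $a=D-t$ so that $r+s+t-D=r+s-a$ and $2D-s-t-1-(D-s)=a-1$. A short expansion gives
\[
\dim_k X_{t,D-s}=(r+s-a)a+(a-1)(D-s)-1=\dim_k R_{d+d'}+t(D-t-r)-1 .
\]
Which of these is the maximum now splits into two cases.

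\emph{Case $t\ge s$.} Here $D-t-1<D-s$, so $m=D-t-1\neq D-s$. Part c) then gives $\dim_k X_{t,h}\le \dim_k Y_t$ for all $h$. Thus $\dim_k P_t=\dim_k Y_t$, which equals the claimed value with $\max\{t-s,0\}=t-s$.

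\emph{Case $t\le s-1$.} Here $m=D-s$, so the largest $X$-stratum is $X_{t,D-s}$ with the dimension computed above. That value exceeds $\dim_k Y_t$ by $s-t>0$. Thus $\dim_k P_t=\dim_k R_{d+d'}+t(D-t-r)-1$, which is the claim with $\max\{t-s,0\}=0$.

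The main obstacle I expect is bookkeeping at the edges of the ranges. Part b) only covers $h\ge 1$, so $X_{t,0}$ must be handled separately. When $s=D$ we have $m=0$ and $X_{t,0}$ is the relevant stratum. Otherwise I must check that $X_{t,0}$ is dominated. I would do this by a direct count: matrices whose last $D-s$ columns vanish form a copy of $P_t$ for $r\times s$ matrices. Alternatively, the formula in b) evaluated at $h=0$ is an upper bound, and that bound is increasing in $h$ up to $m$. I would also check that the pieces used are nonempty for the relevant $t$, which is guaranteed by the hypothesis $r>D$, so that rank $D-t$ is attainable even with the last row zero.
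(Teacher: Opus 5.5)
Your proposal is correct and follows the same route as the paper. Both write $P_t=Y_t\cup\bigcup_h X_{t,h}$, use part c) to reduce to comparing $\dim_k Y_t$ with $\dim_k X_{t,m}$, and evaluate a) and b) at $h=\dim_k R_{d'}-s$ using $\dim_k R_{d+d'}=(r-1)\dim_k R_{d'}+s$. Two details are better than in the paper's proof. Your split into $t\ge s$ and $t\le s-1$ is the correct boundary, whereas the paper's cases $t+1\ge s$ and $t\le s$ are off by one at $t=s-1$ and $t=s$ (harmlessly for the final formula). Your separate treatment of $X_{t,0}$ when $s=\dim_k R_{d'}$ also covers a point the paper passes over by silently plugging $h=0$ into b).
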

\begin{proof}
Since $P_t = Y_t \cup \left( \ \bigcup_h X_{t,h}\ \right )$, it follows from Proposition \ref{prop: dimensions partition P}c 
that $$\dim_k P_t = \max\{\dim_k Y_t,\max_h\{\dim_k X_{t,h}\} \} = \max\{ \dim_k Y_t, \dim_k X_{t,m(t)} \},$$ where $m(t) = \min\{\dim_k R_{d'}-t-1,\dim_k R_{d'}-s  \}$. If $t+1\ge s$ we have that $m(t) \neq \dim_k R_{d'}-s$, and hence by Proposition \ref{prop: dimensions partition P}c that $\dim_k X_{t,m(t)} \le \dim_k Y_t$. Using that $\dim_k R_{d+d'} = (r-1)\dim_k R_{d'}  + s $ we obtain 
$$
\dim_k P_t = \dim_k Y_t = \dim_k R_{d+d'} +(t-s) +  t(\dim_k R_d' - t-r) -1.
$$ If instead $t \le s$, then $m(t) =\dim_k R_{d'}-s $, and inserting $h=\dim_k R_d'-s$ into Proposition \ref{prop: dimensions partition P}b
gives 
$$
\dim_k X_{t,(\dim_k R_{d'}-s)} = \dim_k R_{d+d'} +  t(\dim_k R_d' - t-r) -1,
$$ which concludes the proof.
\end{proof}

We now recall the condensed version of Macaulay's theorem. Following \cite{Eliahou}, for $x \in \mathbb{R}$ and $i$ a non-negative integer, we define 
\[\binom{x}{i} = 
\frac{x(x-1) \cdots (x-i+1)}{i!} \] if $i \geq 1$, and $\binom{x}{0} = 1$.

\begin{theorem}[{\cite[Theorem 5.10]{Eliahou}}] \label{thm:Eliahou}
Let $R$ 
be a standard graded $k$-algebra.
Let $j \geq 1$ be an integer and let 
$x \geq j-1$ be the unique real number satisfying $\dim_k R_j = \binom{x}{j}$. Then 

\[ 
\dim_k R_{j-1} \geq \binom{x-1}{j-1} \text{ and } \dim_k R_{j+1} \leq \binom{x+1}{j+1}. 
\]

\end{theorem}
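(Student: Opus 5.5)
The plan is to derive the statement from the classical form of Macaulay's theorem, via an inequality comparing integer binomial expansions with the real binomial function $\binom{x}{j}$. The upper bound comes first; the lower bound then follows from it formally.

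\emph{Upper bound.} Write $a=\dim_k R_j$ in its $j$-binomial (Macaulay) expansion
\[
a=\binom{a_j}{j}+\binom{a_{j-1}}{j-1}+\cdots+\binom{a_l}{l},\qquad a_j>a_{j-1}>\cdots>a_l\ge l\ge 1 .
\]
Macaulay's theorem gives $\dim_k R_{j+1}\le a^{\langle j\rangle}=\sum_i\binom{a_i+1}{i+1}$. It therefore suffices to prove the purely numerical claim
\[
a^{\langle j\rangle}\le \binom{x+1}{j+1}\quad\text{whenever }\binom{x}{j}=a,\ x\ge j-1 .
\]
I prove this by induction on the number of terms of the expansion, using the following facts.
\begin{enumerate}[label=\roman*)]
\item $u\mapsto\binom{u}{j}$ is strictly increasing on $[j-1,\infty)$, so $x$ is well defined.
\item We have $a_j\le x<a_j+1$.
\item The tail $b=a-\binom{a_j}{j}$ is itself a $(j-1)$-binomial expansion with $b<\binom{a_j}{j-1}$.
\end{enumerate}
Let $y\le a_j$ be the real number with $\binom{y}{j-1}=b$. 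The induction hypothesis gives $b^{\langle j-1\rangle}\le\binom{y+1}{j}$. This reduces everything to a two-parameter inequality: for an integer $A\ge j$, real $x\in[A,A+1]$, and $y$ defined by $\binom{y}{j-1}=\binom{x}{j}-\binom{A}{j}$, we need
\[
\binom{y+1}{j}\le\binom{x+1}{j+1}-\binom{A+1}{j+1}.
\]

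\emph{The key inequality.} I expect this step to be the main obstacle. Both sides vanish at $x=A$. At $x=A+1$ they agree by Pascal's rule, since there $y=A$ and $\binom{A+1}{j}=\binom{A+2}{j+1}-\binom{A+1}{j+1}$. I would therefore parametrize by $x$ and compare derivatives, aiming to show that the difference
\[
D(x)=\binom{x+1}{j+1}-\binom{A+1}{j+1}-\binom{y(x)+1}{j}
\]
is concave on $[A,A+1]$, or at least that $D'$ changes sign at most once. Since $D$ vanishes at both endpoints, either property gives $D\ge0$. Concretely, by the chain rule $dy/dx=\binom{x}{j}'/\binom{y}{j-1}'$. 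The comparison then amounts to comparing logarithmic derivatives of the products $\prod(x-i)$ and $\prod(y-i)$, using $y\le x$. A fallback is to substitute $x=A+\theta$ and expand both sides as polynomials in $\theta$, then check positivity coefficientwise. This is the technical heart of the argument.

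\emph{Lower bound.} Suppose, for contradiction, that $\dim_k R_{j-1}<\binom{x-1}{j-1}$. Write $\dim_k R_{j-1}=\binom{z}{j-1}$. By monotonicity, $z<x-1$; if $\dim_k R_{j-1}$ is small, we may also have to take $z$ at the boundary $j-2$, and that case is treated directly. Applying the already proven upper bound in degree $j-1$ gives
\[
\dim_k R_j\le\binom{z+1}{j}<\binom{x}{j}=\dim_k R_j,
\]
which is a contradiction. The degenerate case $j=1$ holds trivially, since $\dim_k R_0\le 1=\binom{x-1}{0}$ and so the lower bound is vacuous. The only subtlety to check there is that the range conditions $x\ge j-1$ and $z\ge j-2$ make monotonicity applicable.
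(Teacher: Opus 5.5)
The paper gives no proof of this statement (it is quoted from Eliahou), so your proposal has to be judged on its own.

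Its formal skeleton is sound:
\begin{itemize}
\item Macaulay's bound reduces the upper estimate to the numerical claim $a^{\langle j\rangle}\le\binom{x+1}{j+1}$.
\item Peeling off the top term of the $j$-binomial expansion correctly reduces that claim, by induction on the number of terms, to $\binom{y+1}{j}\le\binom{x+1}{j+1}-\binom{A+1}{j+1}$. This is needed for integers $j\ge 2$, $A\ge j$, real $x\in[A,A+1]$, and $y\ge j-2$ with $\binom{y}{j-1}=\binom{x}{j}-\binom{A}{j}$.
\item The lower bound does follow from the upper bound in degree $j-1$ by monotonicity. The boundary value $z=j-2$ needs no separate treatment.
\end{itemize}
One small slip: for $j=1$ the lower bound says $\dim_k R_0\ge 1$. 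This is not vacuous; it holds because $R_0=k$, whereas $\dim_k R_0\le 1$ points the wrong way.

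The genuine gap is that this two-parameter inequality, which is the only content of the theorem beyond Macaulay, is never proved. Your proposed routes do not yet deliver it:
\begin{itemize}
\item Concavity of $D$ is only hoped for, and the derivative comparison is not carried out.
\item The fallback of expanding in $\theta=x-A$ is unavailable: for $j\ge 3$ the function $y(\theta)$ is only implicitly defined and is not a polynomial in $\theta$.
\item A single sign change of $D'$ yields $D\ge 0$ only if it goes from $+$ to $-$; you would have to establish that direction.
\end{itemize}
So, as written, neither bound is established.

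The step can be closed without calculus. Put $b=\binom{y}{j-1}$ and $c=\binom{A}{j}$. Apply $\binom{u+1}{i+1}=\frac{u+1}{i+1}\binom{u}{i}$ to all three terms and multiply by $j(j+1)$; the inequality becomes
\[
b\,\bigl((j+1)y-jx+1\bigr)\le j(x-A)\,c .
\]
Now argue as follows.
\begin{itemize}
\item Since $j-1\le x-1\le A$, monotonicity and Pascal's rule give $b\le\binom{x}{j}-\binom{x-1}{j}=\binom{x-1}{j-1}$. Hence $y\le x-1$, so the bracket is at most $x-j\le A-j+1$.
\item Convexity of $u\mapsto\binom{u}{j}$ on $[j-1,\infty)$ gives $b\le(x-A)\binom{A}{j-1}$.
\item If the bracket is negative there is nothing to show. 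Otherwise the left side is at most $(x-A)(A-j+1)\binom{A}{j-1}=j(x-A)\binom{A}{j}$.
\end{itemize}
With this lemma inserted, your argument becomes a complete proof of both bounds.
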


We now introduce the polynomial $g_{d,d'}(x)$, which will be our main object of study in the following sections. For fixed $d'$ and $d>d'$, we let
\begin{equation}
    g_{d,d'}(x) = \frac{\binom{x+d+d'}{d}}{\binom{d+d'}{d}} - \binom{x+d'}{d'}.
\end{equation}

\begin{lemma} \label{lemma:equiv-}
    For $d > d' > 0$, the following are equivalent:
    \begin{itemize}
        \item $g_{d,d'}(n-1) \ge 0$,
        \item $\dim_k R_{d+d'} \ge (\dim_k R_{d'})^2$,
\item $r \ge \dim_k R_{d'}$ if $s=\dim_k R_{d'}$, and $r>\dim_k R_{d'}$ if $s<\dim_k R_{d'}$.
    \end{itemize}
\end{lemma}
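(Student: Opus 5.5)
The plan is to prove the equivalences (i)$\Leftrightarrow$(ii) and (ii)$\Leftrightarrow$(iii) separately, since both are bookkeeping once the binomial coefficients are written out.

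For (i)$\Leftrightarrow$(ii), I will use that $\dim_k R_j = \binom{n-1+j}{j}$. Setting $x=n-1$ gives
\[
\binom{x+d+d'}{d}\cdot\frac{1}{\binom{d+d'}{d}} = \frac{(n-1+d+d')!}{(n-1+d')!\,d!}\cdot\frac{d!\,d'!}{(d+d')!} = \frac{\binom{n-1+d+d'}{d+d'}}{\binom{n-1+d'}{d'}} = \frac{\dim_k R_{d+d'}}{\dim_k R_{d'}},
\]
and the subtracted term is $\binom{x+d'}{d'} = \dim_k R_{d'}$. Hence
\[
g_{d,d'}(n-1) = \frac{\dim_k R_{d+d'}}{\dim_k R_{d'}} - \dim_k R_{d'}.
\]
Multiplying by the positive number $\dim_k R_{d'}$ shows that $g_{d,d'}(n-1)\ge 0$ if and only if $\dim_k R_{d+d'} \ge (\dim_k R_{d'})^2$.

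For (ii)$\Leftrightarrow$(iii), write $D=\dim_k R_{d'}$. By the definitions of $r$ and $s$ we have $\dim_k R_{d+d'} = (r-1)D + s$ with $1\le s\le D$, so (ii) reads $(r-1)D + s \ge D^2$.

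If $s=D$, this becomes $rD\ge D^2$, i.e. $r\ge D$.

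If $s<D$, then $(r-1)D+s\ge D^2$ forces $(r-1)D > D^2 - D$, so $r-1 > D-1$ and hence $r > D$. Conversely, $r\ge D+1$ gives $(r-1)D + s \ge D^2 + s \ge D^2$.

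Both directions are elementary. The only step that needs care is the factorial manipulation identifying $\binom{n-1+d+d'}{d}/\binom{d+d'}{d}$ with the ratio of Hilbert function values, together with keeping track of the strict versus non-strict inequality in the case $s<D$. I see no real obstacle.
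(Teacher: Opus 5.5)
Your proof is correct and follows essentially the same route as the paper. You identify $g_{d,d'}(n-1)=\dim_k R_{d+d'}/\dim_k R_{d'}-\dim_k R_{d'}$ via the same binomial identity and multiply through by $\dim_k R_{d'}$. You then handle the cases $s=\dim_k R_{d'}$ and $s<\dim_k R_{d'}$ using $\dim_k R_{d+d'}=(r-1)\dim_k R_{d'}+s$, with only a cosmetic difference in how the strict inequality $r>\dim_k R_{d'}$ is extracted.
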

\begin{proof}
By definition we have
$$
 g_{d,d'}(n-1) = \frac{\binom{n-1+d+d'}{d}}{\binom{d+d'}{d}} - \binom{n-1+d'}{d'}  = \frac{\binom{n-1+d+d'}{d+d'}}{\binom{n-1+d'}{d'}} - \binom{n-1+d'}{d'}. 
$$ After multiplying with $\dim_k R_{d'} = \binom{n-1+d'}{d'}$ and using that $\dim_k R_{d+d'} = \binom{n-1+d+d'}{d+d'}$ we obtain
$$
\dim_k R_{d+d'} - (\dim_k R_{d'})^2.
$$ Thus the two first statements are equivalent.

If $s= \dim_k R_{d'}$, then $r= \dim_k R_{d+d'}/\dim_k R_{d'}$ and hence we clearly have that $r\ge \dim_k R_{d'} \iff \dim_k R_{d+d'} \ge (\dim_k R_{d'})^2$. Otherwise, we have $s<\dim_k R_{d'}$. Using $$
\dim_k R_{d+d'} = (r-1)\dim_k R_{d'} +s < r\dim_k R_{d'} 
$$ we find that $\dim_k R_{d+d'} \ge (\dim_k R_{d'})^2 $ implies that $r> \dim_k R_{d'}$. Conversely, if $r>\dim_k R_{d'}$, then 
$$
\dim_k R_{d+d'} = (r-1)\dim_k R_{d'} +s \ge (\dim_k R_{d'})^2.
$$
\end{proof}

\begin{theorem}\label{thm: f-inequality}
Fix $d > d'> 0$ and suppose that 
\[g_{d,d'}(x)\le g_{d,d'}(n-1) \text{ \,\,for all \,\,} x \in [0,n-1]. \] Then, for any $r$, there is an ideal $I$ generated by $r$ forms of degree $d$ such that $\dim_k (R/I)_{d+d'}$ equals the value conjectured by Fr\"oberg.

\end{theorem}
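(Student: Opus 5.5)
The plan is to verify the hypothesis of Lemma~\ref{lemma: dimension-ineq-implies-Fröberg}, namely $\dim_k V < r\dim_k R_d$. We do this by fibering $V$ over the strata $P_t$ and bounding each piece using Proposition~\ref{prop: dimension P_t} together with Theorem~\ref{thm:Eliahou}. Throughout write $D=\dim_k R_{d'}$.

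\emph{Preliminary observations.} Since $g_{d,d'}(0)=1-1=0$, the hypothesis gives $g_{d,d'}(n-1)\ge 0$. By Lemma~\ref{lemma:equiv-} we are then either in the case $r>D$, where Propositions~\ref{prop: dimensions partition P} and~\ref{prop: dimension P_t} apply, or in the boundary case $s=D$, $r=D$. In the boundary case $\dim_k R_{d+d'}=D^2$ exactly, and I would treat it separately. One option is to redo the rank stratification directly; another is to perturb the division (e.g. use $r+1$ forms and note that the spanning claim for $r'\ge r$ only needs spanning). I expect this boundary case to be a technical nuisance rather than the heart of the argument.

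\emph{Fibers of the projection.} Consider the projection $V\to P$. Over a point $A=(a_1,\dots,a_r)\in P_t$, the fiber is the kernel of the linear map $R_d^r\to R_{d+d'}$, $F\mapsto\sum f_ia_i$. Its image is $J_{d+d'}$, where $J=(a_1,\dots,a_r)$, so the fiber has dimension $r\dim_k R_d-\dim_k J_{d+d'}$. Hence
\[
\dim_k V\le \max_t\Big(\dim_k P_t + r\dim_k R_d - \min_{A\in P_t}\dim_k J_{d+d'}\Big),
\]
and it suffices to show $\dim_k P_t<\dim_k J_{d+d'}$ for every $A\in P_t$ and every $t$.

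For $t=0$ we have $J_{d'}=R_{d'}$, so $J_{d+d'}=R_{d+d'}$. Proposition~\ref{prop: dimension P_t} then gives $\dim_k P_0=\dim_k R_{d+d'}-1$, which is fine.

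For $1\le t\le D-1$, Remark~\ref{rmk:rankdim} gives $\dim_k (R/J)_{d'}=t$. Write $t=\binom{x+d'}{d'}$ with $x\in[0,n-1)$. Applying the upper bound of Theorem~\ref{thm:Eliahou} $d$ times to $R/J$ yields
\[
\dim_k (R/J)_{d+d'}\le\binom{x+d+d'}{d+d'}=t\,\frac{\binom{x+d+d'}{d}}{\binom{d+d'}{d}}=t\big(g_{d,d'}(x)+t\big).
\]
Comparing with the formula $\dim_k P_t=\dim_k R_{d+d'}+t(D-t-r)+\max\{t-s,0\}-1$, it suffices to have
\[
t\,g_{d,d'}(x)\le t(r-D)-\max\{t-s,0\}.
\]
The $-1$ in $\dim_k P_t$ supplies the needed strictness.

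\emph{Comparison with $g_{d,d'}(n-1)$.} From $\dim_k R_{d+d'}=(r-1)D+s$ we get $g_{d,d'}(n-1)=r-1-D+s/D$.
\begin{itemize}
\item If $t\ge s$, the right-hand side divided by $t$ equals $r-1-D+s/t\ge r-1-D+s/D=g_{d,d'}(n-1)$.
\item If $t<s$, it equals $r-D\ge g_{d,d'}(n-1)$, since $s\le D$.
\end{itemize}
In both cases the hypothesis $g_{d,d'}(x)\le g_{d,d'}(n-1)$ closes the inequality. Thus $\dim_k V<r\dim_k R_d$, and Lemma~\ref{lemma: dimension-ineq-implies-Fröberg} finishes the proof.

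\emph{Main obstacle.} The step I expect to need the most care is the Macaulay step. One has to make sure the iterated bound $\binom{x+1}{j+1}$ from Theorem~\ref{thm:Eliahou} is applied with $x$ chosen so that $x\ge j-1$ at each stage. One also has to check that $t$ in the range $1\le t\le D-1$ corresponds to $x\in[0,n-1)$, so that the hypothesis on $[0,n-1]$ is exactly what is used.
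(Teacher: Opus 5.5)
Your proposal is correct and is essentially the paper's proof. You fiber $V$ over the strata $P_t$, bound $\dim_k (R/J)_{d+d'}$ by iterating Theorem~\ref{thm:Eliahou}, and compare with $g_{d,d'}(n-1)=r-1-D+s/D$. That is the same comparison the paper makes after multiplying through by $D$ and using $rD\ge\dim_k R_{d+d'}$ when $t\le s$, resp.\ $rD=D+\dim_k R_{d+d'}-s$ when $t>s$.

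The one piece you leave open is the boundary case $r=s=D$, and it closes as in the paper. The condition defining $P$ is then vacuous, so $P_t$ is the projectivized locus of $r\times D$ matrices of rank $D-t$. This has dimension $(r+t)(D-t)-1=\dim_k R_{d+d'}+t(D-t-r)-1$, which is the formula of Proposition~\ref{prop: dimension P_t} since $\max\{t-s,0\}=0$. Hence the rest of your argument applies verbatim. Your alternative of passing to $r+1$ forms would not work, because the cases $r'\le r$ require linear independence, not spanning.
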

\begin{proof}
    Note that by assumption we have that $g_{d,d'}(n-1)\ge g(0)=0$, and hence by Lemma \ref{lemma:equiv-} that $r\ge \dim_k R_{d'}$. 
    Using Lemma \ref{lemma: dimension-ineq-implies-Fröberg} it is enough to show that $\dim_k V < r\dim_k R_d$,
    and since $V = \bigcup_{t} V_t$ it suffices to show $\dim_k V_t < r\dim_k R_d$ for each $0\le t\le \dim_k R_{d'}-1$. Consider the projection, $\pi:V_t\twoheadrightarrow P_t$, onto the second coordinate of $V_t$, i.e. the composition
    $$
    V_t \hookrightarrow \mathbb A^{r\times \dim_k R_d}\times P_t \twoheadrightarrow P_t.
    $$ Clearly this is surjective, as $(0,A) \in V$ for all $A \in P$. After restricting to a irreducible componenet of maximal dimension and applying the fiber dimension theorem we obtain that
$$ 
\dim_k V_t \le \dim_k P_t + \dim_k \pi^{-1}(A),
$$ for some $A \in P_t$.
     
    For any $A \in P_t$ the fiber $\pi^{-1}(A)$ is identified with its projection onto $\mathbb A^{r\times \dim_k R_d}$, which when identified as a subset of $R_d^r$ corresponds to the vector space of elements $(f_1,\dots,f_r)$ satisfying $f_1a_1 +\dots f_ra_r = 0$, where $(a_1,\dots,a_r)$ is a representative for the element corresponding to $A$ in $R_{d'}^r$. This is exactly the kernel of the map
    $$
    R_d^r \xrightarrow[]{\cdot (a_1,\dots,a_r)} R_{d+d'}.
    $$ Using the exact sequence
$$
0 \to \pi^{-1}(A) \to  R_d^r \xrightarrow[]{\cdot (a_1,\dots,a_r)} R_{d+d'} \to (R/(a_1,\dots,a_r))_{d+d'} \to 0,
$$ we find that 
$$
\dim_k \pi^{-1}(A) = r\dim_k R_d - \dim_k R_{d+d'} + \HF_{R/(a_1,\dots,a_r)}(d+d'),
$$
where $\HF$ denotes the Hilbert function.
Now by Remark \ref{rmk:rankdim} we have that $\HF_{R/(a_1,\dots,a_r)}(d')=t$. Let $x$ be the real number satisfying $t = \binom{x}{d'}$. Note that when $t=0$ then $x=d'-1$, and for any $t\ge 1$ then $x\in[d',n+d'-1].$ By repeated use of Theorem \ref{thm:Eliahou}, we have that $\HF_{R/(a_1,\dots,a_r)}(d+d') \le \binom{x+d}{d+d'}$. From Lemma \ref{lemma:equiv-} we have that either $r>\dim_k R_{d'}$, or $r=\dim_k R_{d'}$ and $s=\dim_k R_{d'}$. If $r > \dim_k R_{d'}$, then it follows from Proposition \ref{prop: dimension P_t} that
$$
\dim_k P_t = \dim_k R_{d+d'} + t(\dim_k R_{d'} - t-r) +\max\{t-s,0\}-1.
$$ If instead $r=\dim_k R_{d'}$ and $s=\dim_k R_{d'}$, then $P$ is just the projectivisation of the space of $r\times \dim_{k}R_{d'}$-matrices, and it is well known that
\begin{align*}
    \dim_k P_t = (r+t)(\dim_k R_{d'}-t)-1 \\=
    \dim_k R_{d+d'} + t(\dim_k R_{d'} - t-r) -1.
\end{align*} Since $t< \dim_k R_{d'}=s$, this coincides with the previous expression.

Inserting this into our upper bound on $\dim_k V_t$ we find that
\begin{align*}
    \dim_k V_t \le t(\dim_k R_{d'} - t-r) +\max\{t-s,0\}  + r\dim_k R_d  +\binom{x+d}{d+d'}-1.
\end{align*}
To prove that $\dim_k V_t < r\dim_k R_d$ it is then sufficient to show the inequality
$$
t(\dim_k R_{d'} - t-r) +\max\{t-s,0\}   +\binom{x+d}{d+d'}\le 0.
$$

Consider first the case when $t-s \le 0$. Then, using that $r \dim_k R_{d'} \ge \dim_k R_{d+d'}$ we can multiply the LHS above with $\dim_k R_{d'}$ to obtain the following upper bound
\begin{align} \label{eq:upbound}
-t\left( \dim_k R_{d+d'} - (\dim_k R_{d'})^2\right) + \dim_k R_{d'}\left( \binom{x+d}{d+d'}-t^2\right).
\end{align}
After dividing by $t$ and $\dim_k R_{d'}$ and inserting $t = \binom{x}{d'}$ and $\dim_k R_y = \binom{n+y-1}{y}$ this equals
$$
-\frac{\binom{n+d+d'-1}{d+d'}}{\binom{n+d'-1}{d'}} + \binom{n+d'-1}{d'} + \frac{\binom{x+d}{d+d'}}{\binom{x}{d'}} - \binom{x}{d'}.
$$
We find, using that $\frac{\binom{x+d}{d+d'}}{\binom{x}{d'}} = \frac{\binom{x+d}{d}}{\binom{d+d'}{d}}$, that this is less than or equal to zero if and only if
$$
\frac{\binom{x+d}{d}}{\binom{d+d'}{d}}-\binom{x}{d'} \le \frac{\binom{n+d+d'-1}{d}}{\binom{d+d'}{d}}-\binom{n+d'-1}{d'},
$$ where $x=d'-1$ or $x\in [d',n+d'-1]$. Notice that the left hand side vanishes when $x=d'-1$ and when $x=d'$. Hence, it suffices to consider $x \in [d',n-1+d']$, which after shifting $x$ by $d'$ corresponds to the condition $g_{d,d'}(x) \le g_{d,d'}(n-1)$ for $x \in [0,n-1]$.

In the case where $t-s > 0$, we use the fact that $r\dim_k R_{d'} = \dim_k R_{d'} + \dim_k R_{d+d'} -s$ to get
\begin{align*}
    \dim_k R_{d'}\left( t(\dim_k R_{d'} - t-r) +(t-s)   +\binom{x+d}{d+d'}\right) = \\
    -t(\dim_k R_{d+d'} - (\dim_k R_{d'})^2) + \dim_k R_{d'}\left( \binom{x+d}{d+d'}-t^2\right) + s(t-\dim_k R_{d'}).
\end{align*}
Since $t \le \dim_k R_{d'}$, the expression \eqref{eq:upbound} from the previous case is an upper bound, which finishes the proof.
\end{proof}

The problem of showing that $\dim_k V < r \dim_k R_d$ has thus been reduced to showing an inequality of an explicit polynomial. We believe that $g_{d,d'}(x) \le g_{d,d'}(n-1)$ always holds, under the necessary assumption that $g_{d,d'}(n-1)\ge0$, but we have not been able to prove it in general. The cases where we have been able to prove it are covered in Section \ref{sec:quadrics} and Section \ref{sec:higher}, but first we need to better understand the polynomial $g_{d,d'}$.

\section{A sufficient condition for the Fr\"oberg conjecture to be true up to degree $d+d'$} \label{sec:lemmas} 

We now give a sufficient condition for the Fr\"oberg conjecture to be true up to degree $d+d'$ is that $\dim_k R_{d+d'} \geq (\dim_k R_{d'})^2$ and that
$g_{d,d'}(x)$ has at most one sign-change in its coefficients. For this purpose we will need the following lemmas.

\begin{lemma}\label{lemma: coefficients}
    The coefficient of $x^j$ in $g_{d,d'}(x)$ is given by
\[
\sum_{d'+1 \le b_1 < \dots < b_j \le d+d'} \frac{1}{b_1\cdots b_j} - \sum_{1\le b_1 < \dots < b_j \le d'} \frac{1}{b_1\cdots b_j}.
\]
\end{lemma}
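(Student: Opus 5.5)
The plan is to factor both terms of $g_{d,d'}$ as products of linear factors in $x$ and read off the coefficients as elementary symmetric functions. For the first term, write
\[
\binom{x+d+d'}{d} = \frac{(x+d+d')(x+d+d'-1)\cdots(x+d'+1)}{d!},
\]
and observe that $\binom{d+d'}{d} = \frac{(d+d')\cdots(d'+1)}{d!}$. Dividing gives
\[
\frac{\binom{x+d+d'}{d}}{\binom{d+d'}{d}} = \prod_{b=d'+1}^{d+d'} \frac{x+b}{b} = \prod_{b=d'+1}^{d+d'}\left(1+\frac{x}{b}\right).
\]
In the same way,
\[
\binom{x+d'}{d'} = \frac{(x+d')\cdots(x+1)}{d'!} = \prod_{b=1}^{d'}\left(1+\frac{x}{b}\right).
\]

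Next we expand each product. Expanding $\prod_{b\in B}(1+x/b)$ for a finite set $B$, the coefficient of $x^j$ is the elementary symmetric function $e_j$ in the values $\{1/b : b\in B\}$. That is, it equals $\sum 1/(b_1\cdots b_j)$ over increasing $j$-tuples $b_1<\dots<b_j$ from $B$, with the convention that the empty sum for $j=0$ contributes $1$.

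Taking $B=\{d'+1,\dots,d+d'\}$ for the first term and $B=\{1,\dots,d'\}$ for the second, and subtracting, gives the stated formula. For $j=0$ both terms give $1$, consistent with $g_{d,d'}(0)=0$. When $j>d'$ the second sum is empty and vanishes, which matches the formula as written.

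There is no real obstacle. The only point needing care is the index bookkeeping: the falling factorial $\binom{x+d+d'}{d}$ runs over the factors $x+b$ with $b$ from $d'+1$ to $d+d'$, and the denominators match these factors exactly, so each factor normalizes to $1+x/b$.
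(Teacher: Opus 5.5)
Your proposal is correct and follows essentially the same route as the paper: you write both terms as $\prod_{b=d'+1}^{d+d'}(1+x/b)$ and $\prod_{b=1}^{d'}(1+x/b)$ and read off the coefficient of $x^j$ as the elementary symmetric function in the reciprocals. Your index bookkeeping and your remarks on the edge cases $j=0$ and $j>d'$ are also correct.
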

\begin{proof}
By definition we have $g_{d,d'}(x) =\frac{\binom{x+d+d'}{d}}{\binom{d+d'}{d}} - \binom{x+d'}{d'}$. The two terms can be written as
$$
\frac{\binom{x+d+d'}{d}}{\binom{d+d'}{d}} = \frac{(x+d+d')\cdots(x+d'+1)}{(d+d')\cdots(d'+1)} = \prod_{i=d'+1}^{d+d'}\left(1+\frac{x}{i}\right)
$$ 
and 
$$
\binom{x+d'}{d'} = \frac{(x+d')\cdots(x+1)}{d'(d'-1)\cdots 1}
 = \prod_{i=1}^{d'}\left(1+\frac{x}{i}\right).
$$
The coefficients of $x^j$ in these terms are given by the elementary symmetric polynomial of degree $j$ evaluated at $(1/(d'+1),1/(d'+2),\dots,1/(d'+d))$ and $(1/1,1/2,\dots,1/d')$, respectively, and the statement follows.
\end{proof}
\begin{remark}\label{remark: increase d, decrease d'}
From Lemma \ref{lemma: coefficients} we see that the coefficient of $x^j$ in $g_{d,d'}(x)$ is increasing with respect to $d$, and decreasing with respect to $d'$.
\end{remark}
\begin{lemma}\label{lemma: degree-increase1}
Fix $d>d'>0$, and suppose that $g_{d,d'}(n-1)\ge g_{d,d'}(x)$ for all $x \in [0,n-1]$. Then $g_{d+1,d'}(n-1)\ge g_{d+1,d'}(x)$ for all $x \in [0,n-1]$.
\end{lemma}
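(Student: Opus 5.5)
The plan is to write $g_{d+1,d'}$ as $g_{d,d'}$ plus a correction term that is nondecreasing on $[0,\infty)$. Then the hypothesis transfers to $d+1$ at once.

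Following the proof of Lemma~\ref{lemma: coefficients}, set
\[
A_d(x)=\prod_{i=d'+1}^{d+d'}\left(1+\frac{x}{i}\right)=\frac{\binom{x+d+d'}{d}}{\binom{d+d'}{d}}, \qquad B(x)=\prod_{i=1}^{d'}\left(1+\frac{x}{i}\right)=\binom{x+d'}{d'},
\]
so that $g_{d,d'}=A_d-B$. Passing from $d$ to $d+1$ leaves $B$ unchanged, since $B$ depends only on $d'$. It multiplies $A_d$ by one extra factor, because the index range for $d+1$ is $d'+1\le i\le d+1+d'$:
\[
A_{d+1}(x)=A_d(x)\left(1+\frac{x}{d+d'+1}\right).
\]
Hence
\[
g_{d+1,d'}(x)=g_{d,d'}(x)+h(x),\qquad h(x)=\frac{x\,A_d(x)}{d+d'+1}.
\]

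Next I show that $h$ is nonnegative and nondecreasing on $[0,\infty)$. This holds because $h$ is, up to the positive constant $1/(d+d'+1)$, a product of the factor $x$ and the factors $1+x/i$. Each factor is nonnegative and nondecreasing on $[0,\infty)$, and a product of such functions is again nonnegative and nondecreasing there. Equivalently, $h$ is a polynomial in $x$ with nonnegative coefficients.

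Finally, take $x\in[0,n-1]$. The hypothesis gives $g_{d,d'}(x)\le g_{d,d'}(n-1)$, and monotonicity of $h$ gives $h(x)\le h(n-1)$. Adding these two inequalities yields $g_{d+1,d'}(x)\le g_{d+1,d'}(n-1)$, which is the claim.

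I do not expect a real obstacle. The only step requiring care is checking the product identity for $A_{d+1}$, namely that the new factor is exactly $1+x/(d+d'+1)$.
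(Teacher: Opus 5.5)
Your proof is correct and is essentially the paper's argument. Both rest on the fact that passing from $d$ to $d+1$ multiplies $\binom{x+d+d'}{d}/\binom{d+d'}{d}$ by the factor $1+\frac{x}{d+d'+1}$, which is nondecreasing in $x$, while $\binom{x+d'}{d'}$ is unchanged. The paper phrases this multiplicatively, bounding the new difference of the $d$-dependent terms below by $\frac{n+d+d'}{d+d'+1}\ge 1$ times the old difference; your additive splitting $g_{d+1,d'}=g_{d,d'}+h$ is slightly cleaner, since it needs only the monotonicity of $h$.
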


\begin{proof}
We have that 
$$
g_{d,d'}(n-1) - g_{d,d'}(x) = \frac{\binom{n+d+d'-1}{d'}}{\binom{d+d'}{d}} - \binom{n+d'-1}{d'} -\frac{\binom{x+d+d'}{d}}{\binom{d+d'}{d}} + \binom{x+d'}{d'},
$$ and the only terms depending on $d$ are \[\frac{\binom{n+d+d'-1}{d}}{\binom{d+d'}{d}} = \frac{\binom{n+d+d'-1}{d+d'}}{\binom{n+d'-1}{d'}}, \text{ and } \frac{\binom{x+d+d'}{d}}{\binom{d+d'}{d}}=\frac{\binom{x+d+d'}{d+d'}}{\binom{x+d'}{d'}}.\]
By assumption, we have that $g_{d,d'}(n-1) \ge g_{d,d'}(x)$, for all $x \in [0,n-1]$, or equivalently
$$
\frac{\binom{n+d+d'-1}{d+d'}}{\binom{n+d'-1}{d'}} - \frac{\binom{x+d+d'}{d+d'}}{\binom{x+d'}{d'}} \ge  \binom{n+d'-1}{d'} -  \binom{x+d'}{d'}. 
$$ If we increase $d$ by one the LHS satisfies
\begin{align*}
 \frac{\binom{n+d+d'}{d+d'+1}}{\binom{n+d'-1}{d'}} -\frac{\binom{x+d+1+d'}{d+d'+1}}{\binom{x+d'}{d'}} = \frac{n+d+d'}{d+d'+1}\frac{\binom{n+d+d'-1}{d+d'}}{\binom{n+d'-1}{d'}} -  \frac{x+1+d+d'}{d+d'+1}\frac{\binom{x+d+d'}{d+d'+1}}{\binom{x+d'}{d'}}   \\
 \ge \frac{n+d+d'}{d+d'+1}\left(\frac{\binom{n+d+d'-1}{d+d'}}{\binom{n+d'-1}{d'}} - \frac{\binom{x+d+d'}{d+d'}}{\binom{x+d'}{d'}}  \right) \ge \binom{n+d'-1}{d'} -  \binom{x+d'}{d'}.
\end{align*}

\end{proof}

\begin{lemma}\label{lemma: degree-increase2}
Suppose that $n(d-d') \ge (d')^2$. Then $g_{d,d'}(n) \ge g_{d,d'}(n-1).$
\end{lemma}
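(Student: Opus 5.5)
The plan is to reduce the claim to an explicit inequality between products and then verify it using the hypothesis $n(d-d')\ge (d')^2$. By Pascal's rule,
\[
\binom{n+d+d'}{d}-\binom{n-1+d+d'}{d}=\binom{n+d+d'-1}{d-1},\qquad \binom{n+d'}{d'}-\binom{n-1+d'}{d'}=\binom{n+d'-1}{d'-1}.
\]
So $g_{d,d'}(n)-g_{d,d'}(n-1)\ge 0$ is equivalent to
\[
\frac{\binom{n+d+d'-1}{d-1}}{\binom{d+d'}{d}}\ \ge\ \binom{n+d'-1}{d'-1}.
\]
Next use $\binom{y-1}{j-1}=\frac{j}{y}\binom{y}{j}$ together with the product expansions from the proof of Lemma~\ref{lemma: coefficients}. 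This rewrites the inequality as
\[
\frac{d}{n+d+d'}\prod_{i=d'+1}^{d+d'}\Bigl(1+\frac{n}{i}\Bigr)\ \ge\ \frac{d'}{n+d'}\prod_{i=1}^{d'}\Bigl(1+\frac{n}{i}\Bigr).
\]
This is the form I will work with.

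The second step splits the left product into its first $d'$ factors, $i=d'+1,\dots,2d'$, and the remaining $d-d'$ factors, $i=2d'+1,\dots,d+d'$. This split uses $d>d'$. I will pair the factor $1+n/(d'+j)$ on the left with $1+n/j$ on the right for $j=1,\dots,d'$. Each ratio satisfies
\[
\frac{1+n/(d'+j)}{1+n/j}=\frac{j(n+d'+j)}{(d'+j)(n+j)},
\]
and this ratio is increasing in $j$. The inequality then becomes a statement that the leftover factors $\prod_{i=2d'+1}^{d+d'}(1+n/i)$, together with the prefactor $\frac{d(n+d')}{d'(n+d+d')}$, compensate for the losses $\prod_{j=1}^{d'}\frac{(d'+j)(n+j)}{j(n+d'+j)}$.

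The third step bounds these two sides. For the losses, I will use $\frac{n+j}{n+d'+j}\le 1$ and also the sharper form $1-\frac{d'}{n+d'+j}$. The cleanest route may be to clear denominators fully and compare polynomials in $n$. For the gain, each leftover factor satisfies $1+n/i\ge 1+n/(d+d')$, and a Bernoulli-type bound gives
\[
\prod_{i=2d'+1}^{d+d'}\Bigl(1+\frac{n}{i}\Bigr)\ \ge\ 1+\sum_{i=2d'+1}^{d+d'}\frac{n}{i}\ \ge\ 1+\frac{n(d-d')}{d+d'}.
\]
This is exactly where the hypothesis $n(d-d')\ge (d')^2$ should enter: it makes the gain at least of order $1+(d')^2/(d+d')$, and the prefactor $d/d'$ should cover the rest.

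The main obstacle is the final comparison. The loss product $\prod_{j}\frac{d'+j}{j}=\binom{2d'}{d'}$ grows exponentially in $d'$ when $n$ is small relative to $d'$, whereas the hypothesis only gives polynomial gain. So I expect the crude pairing to be too lossy, and the factors $\frac{n+j}{n+d'+j}$ must be kept: they bring the loss down to $\prod_j\frac{(d'+j)(n+j)}{j(n+d'+j)}$, which is small when $n$ is small. If this still does not close, I would fall back to the polynomial form: view
\[
\frac{\binom{n+d+d'-1}{d-1}}{\binom{d+d'}{d}}-\binom{n+d'-1}{d'-1}
\]
as a polynomial in $n$ and argue monotonically in $d$, in the spirit of Lemma~\ref{lemma: degree-increase1}. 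That argument would reduce to the boundary case $d=d'+\lceil (d')^2/n\rceil$, where the inequality would be checked directly.
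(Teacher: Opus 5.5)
\textbf{Gap: the target inequality is false without an extra hypothesis.} Your Pascal-rule reduction is correct. However, the inequality it reduces to fails under the stated hypothesis alone, so sharpening the pairing or Bernoulli estimates cannot close the argument. Take $d=3$, $d'=2$, $n=4$, where $n(d-d')=4=(d')^2$. Then $g_{3,2}(3)=\binom{8}{3}/10-\binom{5}{2}=-\tfrac{22}{5}$ and $g_{3,2}(4)=\binom{9}{3}/10-\binom{6}{2}=-\tfrac{33}{5}$. In your product form, the left side is $\tfrac{3}{9}\cdot\tfrac{7}{3}\cdot 2\cdot\tfrac{9}{5}=\tfrac{14}{5}$ and the right side is $\tfrac{2}{6}\cdot 5\cdot 3=5$. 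More generally, $g_{3,2}(n)-g_{3,2}(n-1)=\tfrac{(n+4)(n+3)}{20}-(n+1)<0$ for every $4\le n\le 13$. Your proposed fallback to the boundary case $d=d'+\lceil (d')^2/n\rceil$ lands exactly on $(d,d',n)=(3,2,4)$.

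The missing ingredient is the hypothesis $g_{d,d'}(n-1)\ge 0$. The paper's argument needs it too, though only implicitly. It shows $g_{d,d'}(n)\ge\frac{n+d+d'}{n+d'}\,g_{d,d'}(n-1)$, and the final step down to $g_{d,d'}(n-1)$ is valid only when $g_{d,d'}(n-1)\ge 0$. That is exactly the situation in Lemma~\ref{lemma: d'=2-cases}: one starts from a nonnegative value and inducts on $n$, and nonnegativity propagates.

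\textbf{With that hypothesis, your reduction finishes in one line.} No factor pairing is needed. Put $X=\binom{n+d+d'-1}{d}/\binom{d+d'}{d}$ and $Y=\binom{n+d'-1}{d'}$, so that $g_{d,d'}(n-1)=X-Y$. Since $\binom{n+d+d'-1}{d-1}=\frac{d}{n+d'}\binom{n+d+d'-1}{d}$ and $\binom{n+d'-1}{d'-1}=\frac{d'}{n}\binom{n+d'-1}{d'}$, you get
\[
g_{d,d'}(n)-g_{d,d'}(n-1)=\frac{d}{n+d'}\,X-\frac{d'}{n}\,Y .
\]
The hypothesis $n(d-d')\ge (d')^2$ is precisely $\frac{d}{n+d'}\ge\frac{d'}{n}$. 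So $X\ge Y\ge 0$ gives $\frac{d}{n+d'}X\ge\frac{d'}{n}X\ge\frac{d'}{n}Y$.

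This is the paper's argument in another form, since $\frac{n+d+d'}{n+d'}=1+\frac{d}{n+d'}$ and $\frac{n+d'}{n}=1+\frac{d'}{n}$. When $X<Y$, the subtracted term $\frac{d'}{n}Y$ can dominate, as the example above shows.
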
 

\begin{proof}
We have that
$$
g_{d,d'}(n) = \frac{\binom{n+d+d'}{d}}{\binom{d+d'}{d}} - \binom{n+d'}{d'} = \frac{n+d+d'}{n+d'}\frac{\binom{n+d+d'-1}{d'}}{\binom{d+d'}{d}} - \frac{n+d'}{n}\binom{n+d'-1}{d'}.
$$
Since 
$$\frac{n+d+d'}{n+d'} \ge \frac{n+d'}{n} \iff n(d-d') \ge (d')^2,
$$ we find that the assumption $n(d-d') \ge (d')^2$ implies that
\begin{align*}
g_{d,d'}(n) \ge \frac{n+d+d'}{n+d'}\left(\frac{\binom{n+d+d'-1}{d'}}{\binom{d+d'}{d}} - \binom{n+d'-1}{d'} \right) = \frac{n+d+d'}{n+d'}g_{d,d'}(n-1) \\\ge g_{d,d'}(n-1).
\end{align*}
\end{proof}

\begin{lemma}\label{lemma: sign-change}
    Fix $d>d'>0$, and suppose that $g_{d,d'}$ has at most one sign-change in its coefficients. Then $g_{d,d'}(x) \le \max\{0,g_{d,d'}(n-1)\}$ for all $x\in [0,n-1]$.
\end{lemma}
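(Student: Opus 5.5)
Since $g_{d,d'}$ has constant term $g_{d,d'}(0)=1-1=0$, we can write $g_{d,d'}(x)=x\,h(x)$, where $h$ is a polynomial whose coefficients are those of $g_{d,d'}$ shifted down one degree. The coefficient sequence of $h$ therefore has the same number of sign changes as that of $g_{d,d'}$, namely at most one. The plan is to show that on $(0,\infty)$ the function $g_{d,d'}$ is either nonpositive throughout, or first nonpositive and then positive and increasing. Either shape forces its maximum on $[0,n-1]$ to be attained at an endpoint, that is, to be at most $\max\{0,g_{d,d'}(n-1)\}$.

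First we handle the degenerate cases. If all coefficients of $h$ are $\le 0$, then $g_{d,d'}(x)\le 0$ for $x\ge 0$ and we are done. If all are $\ge 0$, then $g_{d,d'}$ is nondecreasing on $[0,\infty)$, so $g_{d,d'}(x)\le g_{d,d'}(n-1)$ on $[0,n-1]$. For the genuine case of exactly one sign change, we first determine its direction. By Lemma \ref{lemma: coefficients}, the leading coefficient (of $x^d$) is $\prod_{i=d'+1}^{d+d'} 1/i>0$, because the second sum is empty for $j=d>d'$. Hence the coefficients go from negative in low degrees to positive in high degrees. Concretely, there is an index $m$ such that the coefficient $c_j$ of $x^j$ satisfies $c_j\le 0$ for $j\le m$ and $c_j\ge 0$ for $j>m$.

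The key step is the standard trick of dividing by a suitable power of $x$. For $x>0$ consider
\[
\frac{g_{d,d'}(x)}{x^{m}}=\sum_{j\le m} c_j x^{j-m}+\sum_{j>m} c_j x^{j-m}.
\]
In the first sum every term is $c_j x^{j-m}$ with $c_j\le0$ and $j-m\le 0$, so it is nondecreasing in $x$. Every term of the second sum is nondecreasing as well. Hence $\phi(x)=g_{d,d'}(x)/x^m$ is nondecreasing on $(0,\infty)$. Now suppose $0<x\le n-1$ and $g_{d,d'}(x)>0$. Then $\phi(n-1)\ge\phi(x)>0$, and so
\[
g_{d,d'}(n-1)=(n-1)^m\phi(n-1)\ge x^m\phi(x)=g_{d,d'}(x).
\]
Otherwise $g_{d,d'}(x)\le 0$, and at $x=0$ we have $g_{d,d'}(0)=0$. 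In all cases $g_{d,d'}(x)\le\max\{0,g_{d,d'}(n-1)\}$. (If $n=1$, the interval is $\{0\}$ and the claim is trivial.)

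The main obstacle is simply organising the signs correctly: we must confirm that the single sign change goes from negative to positive, which the leading coefficient settles via Lemma \ref{lemma: coefficients}, and we must handle zero coefficients. Both are absorbed by the weak inequalities in the definition of $m$. Beyond this, the monotonicity of $g_{d,d'}(x)/x^m$ is elementary, and it is what replaces a Descartes-rule argument on the number of positive roots.
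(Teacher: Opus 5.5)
Your proof is correct, but it takes a different route from the paper's. The paper passes to the derivative of $g_{d,d'}$, whose coefficients show the same single sign change. It applies Descartes' rule of signs to conclude that the derivative has at most one positive root. From the positive leading coefficient it then deduces that $g_{d,d'}$ is first decreasing and then increasing on $[0,\infty)$, so its maximum on any closed interval is attained at an endpoint.

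You instead prove directly that $g_{d,d'}(x)/x^m$ is nondecreasing on $(0,\infty)$, where $m$ marks the position of the sign change. You use this only at points where $g_{d,d'}(x)>0$, to compare $g_{d,d'}(x)$ with $g_{d,d'}(n-1)$. That monotonicity is essentially the mechanism behind Descartes' rule in the one-sign-change case, so your argument is self-contained. It also treats zero coefficients and the no-sign-change case explicitly. The paper's wording ``exactly one strictly positive real root'' is slightly loose there, since with no sign change there is no such root.

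What the paper's route buys is a stronger structural statement: $g_{d,d'}$ is unimodal on $[0,\infty)$, which controls its maximum on every subinterval. Yours gives exactly the bound the lemma needs and uses $g_{d,d'}(0)=0$ for the left endpoint. Your first degenerate case (all coefficients nonpositive) cannot occur, because the leading coefficient is positive, but including it does no harm.
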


\begin{proof} Since $g_{d,d'}(0) = 0$ we have that if $g_{d,d'}$ has at most sign-change in its coefficients, then so does $\frac{d}{dx}g_{d,d'}$. Thus, by Descartes' rule of signs, $\frac{d}{dx}g_{d,d'}$ has exactly one strictly positive real root, and so $g_{d,d'}$ has exactly one stationary point $x_0$, and therefore, the sign of $\frac{d}{dx}g_{d,d'}$ is constant for $x\ge x_0$. Since $g_{d,d'}$ is of degree $d$ with a positive coefficient of $x^d$, we find that $\frac{d}{dx}g_{d,d'}$ must eventually be strictly positive, and in particular $\frac{d}{dx}g_{d,d'}(y) \ge 0$ for $y\ge x_0$. Thus $x_0$ is not a maximum, and on any closed interval $g_{d,d'}$ attains its maximum value at the boundary, which proves the claim.
\end{proof}

\begin{theorem}\label{thm: sign-changes} Fix $d>d'>0$, and suppose that $g_{d,d'}(x)$ has at most one sign-change in its coefficients. Then for any $n$ such that $\dim_{k}R_{d+d'} \ge (\dim_k R_{d'})^2$ we have that, for any $r$, there is an ideal $I$ generated by $r$ forms of degree $d$ such that  $\dim_k(R/I)_{d+d'}$ equals the value conjectured by Fr\"oberg.
\end{theorem}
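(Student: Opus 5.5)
The plan is to combine Lemma \ref{lemma: sign-change} with Theorem \ref{thm: f-inequality}, using Lemma \ref{lemma:equiv-} to supply the sign condition at $n-1$.

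First, the hypothesis $\dim_k R_{d+d'} \ge (\dim_k R_{d'})^2$ is, by Lemma \ref{lemma:equiv-}, equivalent to $g_{d,d'}(n-1) \ge 0$. Hence $\max\{0, g_{d,d'}(n-1)\} = g_{d,d'}(n-1)$.

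Second, since $g_{d,d'}$ has at most one sign-change in its coefficients, Lemma \ref{lemma: sign-change} gives $g_{d,d'}(x) \le \max\{0,g_{d,d'}(n-1)\}$ for all $x\in[0,n-1]$. By the first step this bound equals $g_{d,d'}(n-1)$. This is exactly the hypothesis of Theorem \ref{thm: f-inequality}, which then yields, for any $r$, an ideal generated by $r$ forms of degree $d$ whose quotient has the conjectured dimension in degree $d+d'$.

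The only point needing care is that Lemma \ref{lemma: sign-change} really applies as stated. In particular, when $g_{d,d'}$ has no sign change (all coefficients positive) the function is increasing on $[0,\infty)$, and the bound is immediate. No further obstacle is expected; the argument is a direct chaining of the earlier results.

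If the statement is read as ``up to degree $d+d'$'' rather than only in degree $d+d'$, I would also handle the lower degrees $d+d''$ with $d''<d'$. For these I would need $g_{d,d''}$ to have at most one sign change and $\dim_k R_{d+d''}\ge(\dim_k R_{d''})^2$. The dimension condition follows from the one in degree $d+d'$ by a monotonicity argument, since $\dim R_{d+j}/\dim R_j^2$ decreases in $j$. The sign-change condition would follow from Remark \ref{remark: increase d, decrease d'} comparing coefficients. I would however only pursue this if required, since the theorem as stated concerns degree $d+d'$.
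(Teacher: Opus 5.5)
Your argument is correct and is exactly the paper's proof: Lemma \ref{lemma:equiv-} turns the dimension hypothesis into $g_{d,d'}(n-1)\ge 0$, so the bound $\max\{0,g_{d,d'}(n-1)\}$ from Lemma \ref{lemma: sign-change} equals $g_{d,d'}(n-1)$, and Theorem \ref{thm: f-inequality} then gives the conclusion.

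One caution about your optional last paragraph, which this statement does not need: the coefficientwise monotonicity in Remark \ref{remark: increase d, decrease d'} does not by itself show that $g_{d,d''}$ has at most one sign change whenever $g_{d,d'}$ does, because raising individual coefficients can create new sign changes. This is why the paper instead relies on checking each $d''\le 2100$ directly in Theorem \ref{thm: low d'}.
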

\begin{proof}
    From Lemma \ref{lemma: sign-change} it follows directly that $g_{d,d'}(x)\le \max\{0, g_{d,d'}(n-1)\}$ for all $x \in [0,n-1]$. The assumption $\dim_{k}R_{d+d'} \ge (\dim_k R_{d'})^2$ is equivalent to $g_{d,d'}(n-1)\ge0$, by Lemma \ref{lemma:equiv-}, and hence $g_{d,d'}(x)\le g_{d,d'}(n-1)$ for all $x\in [0,n-1]$. Theorem \ref{thm: f-inequality} finishes the proof.
\end{proof}

\section{Multiplication by quadrics} \label{sec:quadrics}

In this section we consider the case $d'=2$, which corresponds to investigating the Hilbert function in degree $d+2$ of a generic ideal generated by forms of degree $d$. In Section \ref{sec:aubry} we showed, under some assumptions, that an inequality involving the function $g_{d,d'}$, is sufficient to obtain a degree-wise result on Fr\"oberg's conjecture. When $d'=2$, we can show that this inequality does hold, whenever possible.

\begin{prop}\label{prop: sign-change quadratic and linear}
Fix $d>d'>0$, and suppose that the coefficient of $x$ in $g_{d,d'}(x)$ is non-negative. Then the coefficient of $x^2$ is also non-negative.
\end{prop}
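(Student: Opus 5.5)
The plan is to use Lemma \ref{lemma: coefficients}, which writes the coefficient of $x^j$ in $g_{d,d'}$ as $e_j(\alpha)-e_j(\beta)$. Here $e_j$ is the $j$-th elementary symmetric polynomial, $\alpha=(1/(d'+1),\dots,1/(d'+d))$ and $\beta=(1,1/2,\dots,1/d')$. The hypothesis says $e_1(\alpha)\ge e_1(\beta)$, and the goal is $e_2(\alpha)\ge e_2(\beta)$. I would pass to power sums $p_k$ via Newton's identity $e_2=\tfrac12(p_1^2-p_2)$, with $p_1=e_1$. This gives
\[
e_2(\alpha)-e_2(\beta)=\tfrac12\bigl(e_1(\alpha)^2-e_1(\beta)^2\bigr)+\tfrac12\bigl(p_2(\beta)-p_2(\alpha)\bigr).
\]

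For the first term: all entries of $\alpha$ and $\beta$ are positive, so $e_1(\alpha)\ge e_1(\beta)>0$. Hence $e_1(\alpha)^2-e_1(\beta)^2\ge 0$, and this term is non-negative by the hypothesis alone.

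For the second term I need the unconditional inequality $p_2(\alpha)\le p_2(\beta)$. Since $\beta$ contains the entry $1$, we have $p_2(\beta)=\sum_{i=1}^{d'}1/i^2\ge 1$. On the other side I would bound by a telescoping sum:
\[
p_2(\alpha)=\sum_{i=d'+1}^{d+d'}\frac{1}{i^2}<\sum_{i=d'+1}^{\infty}\frac{1}{i(i-1)}=\frac{1}{d'}\le 1.
\]
Combining the two gives $p_2(\alpha)<p_2(\beta)$, so the second term is strictly positive. Together with the first term this yields $e_2(\alpha)-e_2(\beta)>0$, and in particular the coefficient of $x^2$ is non-negative.

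There is no serious obstacle; the only real step is choosing the Newton-identity decomposition. That decomposition separates a part controlled by the hypothesis from a part, $p_2(\alpha)$ versus $p_2(\beta)$, that holds for all $d>d'>0$ because $\beta$ contains the large entry $1$ while every entry of $\alpha$ is at most $1/2$.
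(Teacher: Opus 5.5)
Your proof is correct and is essentially the paper's argument. The paper also squares the hypothesis $e_1(\alpha)\ge e_1(\beta)$, which is the same as your Newton-identity decomposition, and then shows $p_2(\alpha)<p_2(\beta)$ using $p_2(\beta)\ge 1$. The only difference is how that last inequality is obtained: the paper bounds $p_2(\alpha)+p_2(\beta)\le\pi^2/6<2$, whereas your telescoping bound $p_2(\alpha)<1/d'\le 1$ is slightly more elementary.
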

\begin{proof}
    By Lemma \ref{lemma: coefficients}, we have that
$$
\frac{\binom{x+d+d'}{d}}{\binom{d+d'}{2}} = 1 + \left(\sum_{i=d'+1}^{d+d'}\frac{1}{i} \right)x  + \left(\sum_{\substack{d'+1\le i<j\le d+d' }} \frac{1}{ij}\right) x^2 + \cdots, 
$$
and $$
\binom{x+d'}{d'} = 1 + \left(\sum_{i=1}^{d'}\frac{1}{i} \right)x+\left(\sum_{\substack{1\le i<j\le d' }} \frac{1}{ij}\right) x^2 + \cdots.
$$
By assumption the coefficient of $x$ is non-negative, that is
\[
 \sum_{i=d'+1}^{d+d'}\frac{1}{i}  \ge \sum_{i=1}^{d'}\frac{1}{i}.
\]
Squaring both sides now gives the inequality 

\[
 \sum_{i=d'+1}^{d+d'}\frac{1}{i^2} + 2\sum_{\substack{d'+1\le i<j\le d+d'}}\frac{1}{ij}   \ge \sum_{i=1}^{d'}\frac{1}{i^2} + 2\sum_{\substack{1\le i<j\le d'}}\frac{1}{ij}.
\]

Since 
\[
\sum_{i=1}^{\infty}\frac{1}{i^2} = \frac{\pi^2}{6} <2 \text{ \, \, and \, \, } 
\sum_{i=1}^{d'}\frac{1}{i^2} \geq 1,
\] we find that
$$
\sum_{i=d'+1}^{d+d'}\frac{1}{i^2} < \sum_{i=1}^{d'}\frac{1}{i^2},
$$ and hence
\[
\sum_{\substack{d'+1\le i<j\le d+d'}}\frac{1}{ij} > \sum_{\substack{1\le i<j\le d'}}\frac{1}{ij}.
\]
\end{proof}

\begin{corollary} \label{cor:d2}
    There is at most one sign change in the coefficients of $g_{d,2}$, for $d>2$.
\end{corollary}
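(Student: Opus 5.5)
Since $d'=2$, the polynomial $g_{d,2}(x)$ has degree $d$, vanishes at $0$, and by Lemma \ref{lemma: coefficients} its coefficient of $x^j$ is
\[
c_j=e_j\!\left(\tfrac13,\dots,\tfrac1{d+2}\right)-e_j\!\left(1,\tfrac12\right),
\]
where $e_j$ is the $j$th elementary symmetric function. Since $e_j(1,\tfrac12)=0$ for $j\ge 3$, every coefficient $c_j$ with $j\ge 3$ is a sum of positive terms, hence strictly positive. So the signs of $c_1,\dots,c_d$ have the form $(\pm,\pm,+,+,\dots,+)$, and only $c_1$ and $c_2$ need to be examined.

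The plan is a case split on the sign of $c_1$. If $c_1\ge 0$, Proposition \ref{prop: sign-change quadratic and linear} (with $d'=2$) gives $c_2\ge 0$. Then all coefficients are non-negative and there is no sign change. If $c_1<0$, the sign sequence is $(-,c_2,+,\dots,+)$. Whatever the sign of $c_2$ (zero coefficients are ignored when counting sign changes), this pattern has exactly one change from negative to positive. The one point to check is that $c_2$ cannot be negative while $c_1$ is positive. That case is excluded by the contrapositive of Proposition \ref{prop: sign-change quadratic and linear}, so no pattern $+,-,+$ can occur. In every case there is at most one sign change.

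The only place any thought is needed is confirming that the terms $e_j(1,\tfrac12)$ really vanish for $j\ge 3$. This holds because there are only $d'=2$ variables. For $j\le d$, the first sum $e_j(\tfrac13,\dots,\tfrac1{d+2})$ is nonempty since it has $d$ variables, so $c_j>0$ for $3\le j\le d$. With that, the corollary follows directly from Lemma \ref{lemma: coefficients} and Proposition \ref{prop: sign-change quadratic and linear}.
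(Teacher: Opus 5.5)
Your proof is correct and is essentially the paper's argument. The paper also notes that $\binom{x+2}{2}=1+\tfrac32x+\tfrac12x^2$ contributes nothing beyond $x^2$, so only the coefficients of $x$ and $x^2$ can be negative, and then uses Proposition~\ref{prop: sign-change quadratic and linear} to rule out a nonnegative linear coefficient followed by a negative quadratic one. Your explicit case split on the sign of $c_1$, together with the check that $c_j>0$ for $3\le j\le d$, just writes out the sign-pattern bookkeeping the paper leaves implicit.
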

\begin{proof} 
 Since $$
 \binom{x+2}{2} = 1 + \frac{3}{2}x +\frac{1}{2}x^2,
 $$ we get that all coefficients of $g_{d,2}$, except possibly for the coefficients of $x$ and $x^2$, are non-negative. If the coefficient of $x$ in $g_{d,2}$ is non-negative, then, according to  
  Proposition \ref{prop: sign-change quadratic and linear}, then so is the coefficient of $x^2$. Hence there is at most one sign change among the coefficients of $g_{d,2}$.
\end{proof}

\begin{lemma} \label{lemma: d'=2-cases}
The inequality  $g_{d,2}(n-1)\ge 0$ holds in the cases
\begin{itemize}
    \item $d = 3$ and $n \ge 22$;
     \item $d=4$ and $n\ge 6$;
    \item $d\ge 5$ and $n\ge 3$.
\end{itemize}
\end{lemma}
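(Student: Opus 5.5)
By Lemma \ref{lemma:equiv-}, $g_{d,2}(n-1)\ge 0$ is equivalent to $\dim_k R_{d+2}\ge (\dim_k R_2)^2$. Writing $N=n-1$, this reads
\[
\binom{N+d+2}{d+2}\ \ge\ \binom{N+2}{2}^2 .
\]
The plan is to verify this directly at the threshold value of $n$ in each of the three cases. We then propagate it to larger $n$ (for fixed $d$) using Lemma \ref{lemma: degree-increase2}, and to larger $d$ (for fixed $n$) using the monotonicity in Remark \ref{remark: increase d, decrease d'}.

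\textbf{Step 1 (base cases).} These are finite computations.
\begin{itemize}
\item For $d=3$, $n=22$: compare $\binom{26}{5}=65780$ with $\binom{23}{2}^2=253^2=64009$. The inequality holds.
\item For $d=4$, $n=6$: compare $\binom{11}{6}=462$ with $\binom{7}{2}^2=441$. The inequality holds.
\item For $d=5$, $n=3$: compare $\binom{9}{7}=36$ with $\binom{4}{2}^2=36$. This gives equality, so $g_{5,2}(2)=0\ge 0$.
\end{itemize}

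\textbf{Step 2 (increasing $n$).} Lemma \ref{lemma: degree-increase2} gives $g_{d,2}(n)\ge g_{d,2}(n-1)$ whenever $n(d-2)\ge 4$. At the base values this condition holds: $22\cdot 1\ge 4$, $6\cdot 2\ge 4$, and $3\cdot 3\ge 4$. It clearly persists for larger $n$ and larger $d$. Hence, for fixed $d$, nonnegativity at the threshold $n_0$ propagates by induction to all $n\ge n_0$. This settles $d=3$ and $d=4$ completely, and it settles $d=5$ for all $n\ge 3$.

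\textbf{Step 3 (increasing $d$).} For $d\ge 6$ and $n\ge 3$ we need an argument uniform in $d$. By Lemma \ref{lemma: coefficients}, every coefficient of $g_{d,2}$ is nondecreasing in $d$; this is Remark \ref{remark: increase d, decrease d'}, since adding $d+d'+1$ only adds nonnegative terms to the first sum. Evaluating at $x=n-1\ge 0$, where all powers $x^j$ are nonnegative, gives $g_{d+1,2}(n-1)\ge g_{d,2}(n-1)$. The increase of the degree-$(d+1)$ term is also nonnegative. Therefore $g_{d,2}(n-1)\ge g_{5,2}(n-1)\ge 0$ for all $d\ge 5$ and $n\ge 3$.

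\textbf{Main obstacle.} The only delicate point is the $d=5$, $n=3$ base case, where equality holds. The argument must therefore use the non-strict inequality throughout, and Lemma \ref{lemma: degree-increase2} indeed only gives a non-strict one. Everything else is routine arithmetic.
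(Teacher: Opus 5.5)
Your proof is correct and is essentially the paper's argument. It uses the same three base values (equivalently $g_{3,2}(21)=7$, $g_{4,2}(5)=1$, $g_{5,2}(2)=0$), propagated in $n$ by Lemma~\ref{lemma: degree-increase2}; your Step 3, via the monotonicity in $d$ from Remark~\ref{remark: increase d, decrease d'}, makes explicit the passage to $d\ge 6$, which the paper's one-line proof leaves unstated.

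One small caveat concerns Lemma~\ref{lemma: degree-increase2}. Its proof actually gives $g_{d,d'}(n)\ge \frac{n+d+d'}{n+d'}\,g_{d,d'}(n-1)$, so it yields $g_{d,d'}(n)\ge g_{d,d'}(n-1)$ only when $g_{d,d'}(n-1)\ge 0$. As literally stated it fails, for instance, for $d=3$, $d'=2$, $n=4$. That extra hypothesis is exactly what your induction supplies at each step, so your argument is unaffected.
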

\begin{proof}
    A calculation shows that $g_{3,2}(21) = 7,  g_{4,2}(5) = 1$, 
    and $g_{5,2}(2) = 0$. Since each of these cases satisfies $n(d-2) \ge 4$, the claim follows by Lemma~\ref{lemma: degree-increase2}.
\end{proof}

To handle the cases not covered by Lemma \ref{lemma: d'=2-cases}, the following lemma will be useful. 

\begin{lemma} \label{lemma:computations} 
Fix $d>d'>0$ and let $n'<n$ and let $R'=k[x_1,\ldots,x_{n'}]\subseteq R$.
Let 
$$\{m_i\}_{i=1}^{\dim_k R_{d'}} = \{m_i'\}_{i=1}^{\dim_k R_{d'}'} \cup \{m_i''\}_{i=1}^{\dim_k R_{d'}-\dim_k R_{d'}'}$$
be a basis for $R_{d'}$, such that $m_i' \in R'_{d'}$ and $m_i'' \notin R'_{d'}.$

Suppose that there exist forms $p_1,\dots,p_l$ in $R_d'$ such that 
\begin{equation}\label{eq: cond1}
    \{p_im_j'\} \text{ is a $k$-basis for $R_{d+d'}'$}.
\end{equation}
Suppose also that there exist forms $f_1,\dots,f_r$ with $f_i\in R_d'$ for $i=1,\ldots, l$ such that 
\begin{equation} \label{eq: cond2}
\{f_im_j''\}_{i\le l} \ \cup \{f_im_j\}_{l<i<r} \cup \ \{f_rm_1,\dots, f_rm_s\}
\text{ is a $k$-basis for $R_{d+d'}/R_{d+d'}'$.}
\end{equation}
 Then there exist forms $h_1,\dots,h_r$, such that
$$
 \{h_im_j\}_{i<r} \cup \ \{h_rm_1,\dots, h_rm_s\} \text{ is a $k$-basis for $R_{d+d'}$}
$$
\end{lemma}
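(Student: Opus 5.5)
The idea is to interpolate between the two given families. For $i\le l$ set $h_i(t)=(1-t)p_i+tf_i$ with $t\in k$, and for $l<i\le r$ set $h_i=f_i$. Then we show that for all but finitely many $t$ the required set is a basis. Throughout we assume $l<r$, which is implicit in the statement since the indexing in \eqref{eq: cond2} separates $i\le l$ from $l<i<r$.

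First, the counting. By \eqref{eq: cond1}, $l\dim_k R'_{d'}=\dim_k R'_{d+d'}$. By \eqref{eq: cond2}, the number of elements $f_im_j''$ ($i\le l$), $f_im_j$ ($l<i<r$) and $f_rm_1,\dots,f_rm_s$ equals $\dim_k R_{d+d'}-\dim_k R'_{d+d'}$. Since $\{m_j\}=\{m_j'\}\cup\{m_j''\}$, the set $\{h_im_j\}_{i<r}\cup\{h_rm_1,\dots,h_rm_s\}$ therefore has exactly $\dim_k R_{d+d'}$ elements. It suffices to show that the square matrix $M(t)$ expressing these elements in a basis of $R_{d+d'}$ has nonzero determinant for some $t$.

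Next, the block structure. Choose a basis of $R_{d+d'}$ consisting of a basis of $R'_{d+d'}$ followed by monomials outside $R'$, which map to a basis of $R_{d+d'}/R'_{d+d'}$. Order the rows so that the elements $h_i(t)m_j'$ with $i\le l$ come first. Both $p_i$ and $f_i$ lie in $R'_d$ for $i\le l$, so $h_i(t)\in R'_d$ and $h_i(t)m_j'\in R'_{d+d'}$. Hence these rows vanish in the complement columns, and $M(t)$ is block triangular:
\[
\det M(t)=\det A(t)\cdot\det B(t).
\]
Here $A(t)$ is the square matrix of $\{h_i(t)m_j'\}_{i\le l}$ in $R'_{d+d'}$. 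The matrix $B(t)$ is that of the images of $\{h_i(t)m_j''\}_{i\le l}\cup\{f_im_j\}_{l<i<r}\cup\{f_rm_1,\dots,f_rm_s\}$ in $R_{d+d'}/R'_{d+d'}$. Both are square by the count above.

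Finally, genericity. Both $\det A(t)$ and $\det B(t)$ are polynomials in $t$, since the entries depend affinely on $t$. At $t=0$ we have $h_i=p_i$, so $\det A(0)\neq0$ by \eqref{eq: cond1}. At $t=1$ we have $h_i=f_i$, so $\det B(1)\neq0$ by \eqref{eq: cond2}. Thus neither polynomial is identically zero, and their product is a nonzero polynomial in $t$. As $k$ is infinite, we can choose $t_0$ with $\det M(t_0)\neq 0$ and put $h_i=h_i(t_0)$. The only delicate point is the block-triangularity, which requires that the $h_i$ with $i\le l$ stay inside $R'_d$. That holds because both endpoints $p_i$ and $f_i$ were chosen in $R'_d$, so the straight-line interpolation never leaves $R'_d$.
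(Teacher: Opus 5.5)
Your proof is correct and is essentially the paper's argument. The paper notes that \eqref{eq: cond1} and \eqref{eq: cond2} are non-empty Zariski-open conditions on the parameter space $\mathbb{A}^{l\times \dim_k R'_d}\times\mathbb{A}^{(r-l)\times\dim_k R_d}$ and picks a point in their intersection; your restriction to the line through $(p_1,\dots,p_l,f_{l+1},\dots,f_r)$ and $(f_1,\dots,f_r)$ is an explicit proof that these two open sets meet, and your factorization $\det M(t)=\det A(t)\det B(t)$ spells out the block-triangularity behind the paper's final ``It follows that'' (your assumption $l<r$ is implicit in the statement and holds in every case of Table~\ref{fig:table}).
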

\begin{proof} 
 We remark first that Condition \eqref{eq: cond1} implies that the number of forms $l$ of degree $d$ equals   
$\dim_k R_{d+d'}' / \dim_k R_{d'}',$ and especially, that $n'$ must be chosen so that $\dim_k R_{d'}'$ divides  $\dim_k R_{d+d'}'$.

 Any set $\{f_1,\dots,f_l\}$ with $f_i \in R_d'$ corresponds to a point in $\mathbb{A}^{ l\times \dim_k R_{d}'}$. Condition \eqref{eq: cond1} is an open condition, so there exists a non-empty subset of $\mathbb{A}^{ l\times \dim_k R_{d}'} $ consisting of collections of $l$ forms in $R_d'$ with this property. 
 
Condition \eqref{eq: cond2}
is similarly open on  $\mathbb{A}^{l\times \dim_k R_{d}'} \times \mathbb{A}^{(r-l)\times \dim_k R_{d}}$. Hence there exists a non-empty open subset of $\mathbb{A}^{l\times \dim_k R_{d}'} \times \mathbb{A}^{(r-l)\times \dim_k R_{d}'}$ corresponding to collections of forms $h_1,\dots,h_r$ such that $h_1,\dots,h_l$ satisfy \eqref{eq: cond1} and such that $h_{1},\dots,h_r$ satisfy \eqref{eq: cond2}.

It follows that
$$
 \{h_im_j\}_{i<r} \cup \ \{h_rm_1,\dots, h_rm_s\} \text{ is a $k$-basis for $R_{d+d'}$.}
$$
\end{proof}

\begin{prop} \label{prop:calc}
The Fr\"oberg conjecture is true up to degree $d+2$ in the cases
\begin{itemize}
    \item $d=3$ and $n \le 21$;
     \item $d=4$ and $n \le 5$.
\end{itemize}
\end{prop}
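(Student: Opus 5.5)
The statement reduces to finitely many explicit cases: $(d,n)=(3,n)$ with $n\le 21$ and $(4,n)$ with $n\le 5$. The plan is to settle each one by exhibiting a single witness. The relevant property is open, so one example per case suffices. By Lemma \ref{lemma: dimension-ineq-implies-Fröberg} and its proof, for fixed $(n,d)$ it is enough to find forms $f_1,\dots,f_r\in R_d$, with $r=r(n,d,2)$ and $s=s(n,d,2)$, satisfying Property \eqref{gcase}. Such a witness gives the conjectured value in degree $d+2$ for every number of generators $r'$. Degree $d+1$ is covered by Hochster--Laksov, and lower degrees are trivial.

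For small $n$, where $\dim_k R_{d+2}$ is modest, I will verify Property \eqref{gcase} directly by computer. I will take random forms with small integer coefficients over a large prime field $\mathbb{F}_p$ and check that the square matrix expressing $\{f_im_j\}_{i<r}\cup\{f_rm_1,\dots,f_rm_s\}$ in the monomial basis of $R_{d+2}$ has full rank. Full rank over $\mathbb{F}_p$ lifts to full rank of the integer matrix, hence over $\mathbb{Q}$. It therefore holds over any field of characteristic $0$, and for general characteristic one notes the nonvanishing of a minor for all but finitely many primes, handling the remaining primes separately, or rather argues per characteristic by repeating the check. The honest caveat is that the conclusion is characteristic-sensitive, so I would state the computation as performed over $\mathbb{Q}$ and in the needed characteristics.

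For larger $n$, especially $d=3$ with $n$ up to $21$, the matrices have size $\dim_k R_5=\binom{n+4}{5}$, about $53130$ at $n=21$. Direct rank computation becomes expensive. Here I will use Lemma \ref{lemma:computations} to split the problem. I will choose $n'<n$ such that $\dim_k R'_2$ divides $\dim_k R'_{d+2}$, which is needed for Condition \eqref{eq: cond1}, and such that a known witness $p_1,\dots,p_l$ exists in $R'$, for instance from a smaller case already computed. Then I only need to check Condition \eqref{eq: cond2}, namely that the images in the quotient $R_{d+2}/R'_{d+2}$ (monomials involving at least one of the new variables) form a basis. This is a smaller, block-structured rank check, and it can be organized further by grading on the degree in the new variables. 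Where possible I may iterate this reduction, peeling off variables in stages.

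The main obstacle is computational feasibility for $d=3$ and $n$ near $21$. The first strategy I would try is the splitting of Lemma \ref{lemma:computations} with sparse random forms, for example forms supported on few monomials, which keeps the linear algebra sparse, together with modular rank computations. If a particular $n$ admits no convenient $n'$ with the divisibility condition, I would fall back on a direct sparse rank computation modulo a prime.
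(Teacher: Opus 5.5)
Your plan is essentially the paper's proof: direct rank checks over small prime fields for $d=4$, $n\le 5$ and $d=3$, $n\le 15$, then Lemma~\ref{lemma:computations} for $d=3$, $16\le n\le 21$. The paper takes $n'=13,13,16,17,18,18$, each satisfying $60\mid (n'+2)(n'+3)(n'+4)$, and the witnesses for Condition~\eqref{eq: cond1} at $n'=16,17,18$ are themselves earlier rows of the table, exactly as in your iterated peeling; it then passes from $\mathbb{F}_p$ to $\mathbb{Z}$ to $\mathbb{Q}$ by the same argument you give. Your characteristic caveat is well taken, but your remedy (handling the exceptional primes separately) is not executable, since those primes are the unknown divisors of an enormous determinant; so, like the paper's own argument, the computation certifies only characteristic $0$, characteristic $p$ for the prime used, and all but finitely many other characteristics.
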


\begin{proof}
For the cases $d=4, n \leq 5$ and $d=3, n \leq 15$ we can perform a standard calculation in Macaulay2 \cite{M2} to check the correctness of the statement. For higher values of $n$ we use Lemma \ref{lemma:computations} and Macaulay2 \cite{M2}. All computations are performed over a finite field for efficiency reasons. This is motivated by the fact that a relation over $\mathbb{Q}$ can be written as a relation over $\mathbb{Z}$ by clearing the denominators. After dividing out common factors, such a relation holds non-trivially also in a finite field with $p$ elements.

The choices of $n'$ and $p$ are given in Table \ref{fig:table}.
The Macaulay2 code used for the calculations can be found \href{https://github.com/Elrokaren/Computer-calculations---Independence-of-generic-forms-and-the-Fr-berg-conjecture.}{here}.

\begin{table}[h!]
\centering
\begin{tabular}{c|ccccccccc}
\hline
$n$  &$\leq 15$ & 16 & 17 & 18 & 19 & 20 & 21 \\
$n'$ &- & 13 & 13 & 16 & 17 & 18 & 18 \\
$l$  & - & 68 & 68 & 114 & 133 & 154 & 154 \\
$p$ & 11 & 11 & 11 & 11 & 5 & 5 & 5 \\
\hline
\end{tabular}
\caption{Choices for $n'$ and $p$ in Lemma \ref{lemma:computations} for different values of $n$. The value $l$ is the quotient 
$\dim_k R_{5}'/\dim_k R_2'.$} \label{fig:table}
\end{table}

\end{proof}

We are now ready to prove Theorem \ref{thm:d2}.

\begin{proof}[Proof of Theorem \ref{thm:d2}]
Suppose first that $d=3$, $n \geq 22$, or $d=4$, $n \geq 6$, or $d\geq 5$, $n \geq 3$. By Corollary \ref{cor:d2}, there is only one sign change among the coefficients of $g_{d,2}$. Moreover, from Lemma \ref{lemma: sign-change} and Lemma \ref{lemma: d'=2-cases}, 
 it follows that
$g_{d,2}(x) \leq g_{d,2}(n-1)$ for all 
$x \in [0,n-1]$. Now Theorem \ref{thm: f-inequality} gives us the claim.

Since the Fr\"oberg conjecture is true for $n \leq 3$, see \cite{ANICK1986235,Froberg1985}, we are left with the cases $d=4,n \leq  5$ and $d=3, n \leq 21$, which are covered by Proposition \ref{prop:calc}.

\end{proof}

\section{The case $d' \geq 3$} \label{sec:higher}
Having established that the Fr\"oberg conjecture is true up to degree $d+2$ for $d > 2$, we now turn to the case $d' \geq 3.$ We give a result on the sign changes of $g_{d,d-1}(x)$, and used it to prove Theorem \ref{thm:asymto}. We also show that for any $d'\leq \compBound$ and $d > d'$, the Fr\"oberg conjecture is true up to degree $d+d'$ provided that 
$\dim_k R_{d+d'} \geq (\dim_k R_{d'})^2.$  

\begin{prop}\label{prop: negative-coeff-d-1}
    The coefficient of $x^{d}$ in $g_{d,d-1}(x)$ is positive, while all other coefficients 
    are negative, for all $d\geq2$.
\end{prop}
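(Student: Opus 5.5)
The plan is a direct comparison of coefficients via Lemma~\ref{lemma: coefficients}, which for $d'=d-1$ reads
\[
g_{d,d-1}(x)=\prod_{i=d}^{2d-1}\Bigl(1+\frac{x}{i}\Bigr)-\prod_{i=1}^{d-1}\Bigl(1+\frac{x}{i}\Bigr).
\]
Write $a=(1/d,1/(d+1),\dots,1/(2d-1))$ and $b=(1,1/2,\dots,1/(d-1))$, and let $e_j$ denote the elementary symmetric polynomials. The coefficient of $x^j$ is then $e_j(a)-e_j(b)$.

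First I dispose of the easy coefficients. The first product has degree $d$ and the second has degree $d-1$, so the coefficient of $x^d$ is $e_d(a)=1/(d(d+1)\cdots(2d-1))>0$. The constant term is $1-1=0$. So ``all other coefficients'' is read as those of $x^j$ for $1\le j\le d-1$, and I must show that $e_j(a)<e_j(b)$ for these $j$.

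Upper bound on $e_j(a)$. Expanding $(a_1+\dots+a_d)^j$ by the multinomial theorem, each squarefree monomial $a_{i_1}\cdots a_{i_j}$ with distinct indices appears with coefficient $j!$, and all remaining terms are nonnegative. Hence
\[
e_j(a)\le \frac{p_1(a)^j}{j!},\qquad p_1(a)=\sum_{i=d}^{2d-1}\frac1i .
\]

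Lower bound on $e_j(b)$. All terms of $e_j(b)$ are positive, so keeping only the single term $1\cdot\frac12\cdots\frac1j$ gives $e_j(b)\ge 1/j!$. This term exists since $j\le d-1$.

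It therefore suffices to show $p_1(a)<1$, because then $e_j(a)\le p_1(a)^j/j!<1/j!\le e_j(b)$ for every $j\ge1$. This is the only real estimate.
\begin{itemize}
\item For small $d$ I check directly: $d=2$ gives $\tfrac12+\tfrac13=\tfrac56$, and $d=3$ gives $\tfrac13+\tfrac14+\tfrac15=\tfrac{47}{60}$.
\item In general, since $1/x$ is decreasing, $p_1(a)\le \frac1d+\int_d^{2d-1}\frac{dx}{x}<\frac1d+\ln 2$. This is less than $1$ as soon as $d\ge 4$, because $\ln 2<0.7$.
\end{itemize}

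I expect no serious obstacle. The only care needed is the strictness of the inequality, which comes from $p_1(a)<1$ rather than from the lower bound on $e_j(b)$. That lower bound can be an equality, namely at $j=d-1$.
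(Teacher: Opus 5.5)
Your proof is correct, and it takes a genuinely different route from the paper's.

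The paper handles the coefficient of $x^{d-1}$ by a direct product estimate. It then treats $j=1$ and $1<j<d-1$ by induction on $d'=d-1$. In that induction it peels off the smallest entry of each list through $e_j(z_1,\dots,z_m)=e_j(z_1,\dots,z_{m-1})+z_m\,e_{j-1}(z_1,\dots,z_{m-1})$, and compares with the window $(1/d',\dots,1/(2d'-1))$ from the previous step.

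You instead give one non-inductive argument for all $j$ at once. The bounds $j!\,e_j(a)\le p_1(a)^j$ and $e_j(b)\ge 1/j!$ reduce everything to the single estimate $\sum_{i=d}^{2d-1}1/i<1$. That estimate does not even need the integral: the sum decreases in $d$ because $\frac{1}{2d}+\frac{1}{2d+1}<\frac{1}{d}$, which is the same inequality that drives the paper's $j=1$ step.

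Your route is shorter and avoids the two-index bookkeeping of the paper's induction. It is also not tied to $d=d'+1$. Take any $d>d'$ with $\sum_{i=d'+1}^{d+d'}1/i<1$; for example, any $d\le 1.7\,d'$ qualifies, since the sum is below $\ln\frac{d+d'}{d'}$. For such $d$, the same two inequalities show that the coefficients of $x,\dots,x^{d'}$ in $g_{d,d'}$ are negative while those of $x^{d'+1},\dots,x^{d}$ are positive. That gives exactly one sign change. For $d>d'+1$ the paper obtains this only through its MATLAB check, and only for $d'\le 2100$. The paper's induction, by contrast, is tailored to $d=d'+1$ and stays entirely within comparisons of finite sums.

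You were also right to point out that the constant term vanishes, so the statement must be read for $1\le j\le d-1$, as the paper implicitly does.
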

\begin{proof}
For technical reasons, we let $d'=d-1$ throughout the proof. The coefficient of $x^{d'+1}$ in $g_{d'+1,d'}(x)$ is positive by construction.

Let $j \leq d'+1.$ By Lemma \ref{lemma: coefficients}, the coefficient of $x^j$ in  $g_{d'+1,d'}(x)$ is  equal to
\[
\sum_{d'+1 \le b_1 < \dots < b_j \le 2d'+1} \frac{1}{b_1\cdots b_j} - \sum_{1\le b_1 < \dots < b_j \le d'} \frac{1}{b_1\cdots b_j}.
\]

We prove that this is negative for $j \leq d'$ by first considering $j=d'$, thereafter $j=1$ by induction on $d'$, and lastly, the remaining terms, also by induction on $d'$.
For $j=d'$, the coefficient is

\[
\begin{gathered}
    \frac{d'+1 + \dots + 2d'+1}{(d'+1)\cdots (2d'+1)} - \frac{1}{d'!} \le \frac{(2d'+1)(d'+1)}{(d'+1)\cdots (2d'+1)} - \frac{1}{d'!} \le \\
    \frac{1}{2 d' (d'+2)\cdots (2d'-1)} - \frac{1}{d'!} < 0,
\end{gathered}
\] where the last inequality is given by observing that both terms contain $d'$ factors and each factor in the left term is smaller than the corresponding factor in the right term. 

Continuing with the coefficient of $x$ we argue by induction on $d'$, with the case $d'=1$ following by the previous argument. The coefficient is given by
\[
\sum_{d'+1 \le b\le 2d'+1} \frac{1}{b} - \sum_{1 \le b\le d'} \frac{1}{b}.
\] From the induction assumption we already know that
\[
\sum_{d' \le b\le 2d'-1} \frac{1}{b} < \sum_{1 \le b\le d'-1} \frac{1}{b}
\] and since $\frac{1}{d'} > \frac{1}{2d'}+ \frac{1}{2d'+1}$, it follows that the coefficient is negative.

For the coefficients of $x^j$, $1<j<d'$, we argue by induction on $d'$, assuming that  coefficients are negative for any $j$ for $d'-1$.
Rewriting the expressions as
\begin{align*}
  \sum_{d'+1 \le a_1 < \dots < a_j \le 2d'+1} \frac{1}{a_1\cdots a_j} = \sum_{d'+1 \le a_1 < \dots < a_j \le 2d'} \frac{1}{a_1\cdots a_j}  \\+ \sum_{d'+1 \le a_1 < \dots < a_{j-1} \le 2d'} \frac{1}{a_1\cdots a_{j-1}}\frac{1}{2d'+1}  
\end{align*}

\begin{align*}
   \hspace{-2em}\sum_{1\le b_1 < \dots < b_j \le d'} \frac{1}{b_1\cdots b_j} = \sum_{1\le b_1 < \dots < b_j \le d'-1} \frac{1}{b_1\cdots b_j} \\+ \sum_{1\le b_1 < \dots < b_{j-1} \le d'-1} \frac{1}{b_1\cdots b_{j-1}}\frac{1}{d'}, 
\end{align*}
the induction assumption on $d'$ gives that 
\begin{align*}
 \hspace{-2em}\sum_{1\le b_1 < \dots < b_j \le d'-1} \frac{1}{b_1\cdots b_j} \ge \sum_{d' \le a_1 < \dots < a_j \le 2d'-1} \frac{1}{a_1\cdots a_j} \\ \ge  \sum_{d'+1 \le a_1 < \dots < a_j \le 2d'} \frac{1}{a_1\cdots a_j},   
\end{align*}
and 
\begin{align*}
\hspace{-2em}\sum_{1\le b_1 < \dots < b_{j-1} \le d'-1} \frac{1}{b_1\cdots b_{j-1}} \frac{1}{d'} \ge \sum_{d' \le a_1 < \dots < a_{j-1} \le 2d'-1} \frac{1}{a_1\cdots a_{j-1}}\frac{1}{d'} \\\ge \hspace{-1em}\sum_{d'+1 \le a_1 < \dots < a_{j-1} \le 2d'} \frac{1}{a_1\cdots a_{j-1}}\frac{1}{2d'+1},    
\end{align*}
which gives the desired inequality.
\end{proof}

From Lemma \ref{lemma: degree-increase1} one sees that if the inequality $g_{d,d'}(n-1)\ge g_{d,d'}(x)$ holds for any $d$, then it is also true for higher values of $d$. Thus if we are able to prove the inequality for $d'=d-1$, this solves it for all possible $d$. This observation, together with Proposition \ref{prop: negative-coeff-d-1}, which deals with the case $d'=d-1$, is what enables us to prove our asymptotic result, Theorem \ref{thm:asymto}.

\begin{proof}[Proof of Theorem \ref{thm:asymto}]
Fix any integer $d'$ and consider first $d=d'+1$.
Since $g_{d,d'}(x)$ is of degree $d$ with positive coefficient of $x^d$, see Proposition \ref{prop: negative-coeff-d-1}, there exists $N=N(d') \in \mathbb{N}$ such that
$g_{d,d'}(y)\ge g_{d,d'}(z) \ge g_{d,d'}(N-1) \ge 0$
for all $y\ge z\ge N-1$. By Proposition \ref{prop: negative-coeff-d-1}, the polynomial $g_{d,d'}$ has exactly one sign-change in its coefficients, and combined with Lemma \ref{lemma: sign-change} we get that $g_{d,d'}(n-1) \ge g_{d,d'}(x)$ for all $x \in [0,n-1]$, for every $n \ge N$. It follows by Lemma \ref{lemma: degree-increase1} that the same is true for any degree $d > d'$. Hence Theorem \ref{thm: f-inequality} applies for any $d> d'$, when $n\ge N$, and it follows that, for any $r$, there is an ideal $I$ in $R$ generated by $r$ forms of degree $d$ such that $\dim_k(R/I)_{d+d'}$ equals the value conjectured by Fr\"oberg. 

Note that for a fixed $n,d$ and $r$, this property is generic for each $d'$. So if we fix $d$, then, as seen above, for each $d'<d$ there exists a $N(d')$ such that for any $r$, when $n\ge N(d')$, a generic ideal generated by $r$ forms of degree $d$ has the Hilbert function conjectured by Fr\"oberg in degree $d+d'$. Taking $N'$ to be the maximum of these $N(d')$ we get that, for any $r$, when $n\ge N'$, a generic ideal generated by $r$ forms of degree $d$ has the conjectured values of Hilbert function up to degree $2d-1.$
\end{proof}

Recall that the coefficients of $g_{d,d'}$ are increasing with respect to $d$, see Remark \ref{remark: increase d, decrease d'}. Thus, for each $d'$, it suffices to check the number of sign changes of $g_{d,d'}$ in a finite number of cases to conclude that there occurs at most one sign-change. Using this, we obtain an explicit condition, for low values of $d'$, which is sufficient to prove the Fr\"oberg conjecture for ideals generated by forms of degree $d$, up to degree $d+d'.$

\begin{theorem}\label{thm: low d'}
Fix $d>d'>0$ and $n$ such that $\dim_{k}R_{d+d'} \ge (\dim_k R_{d'})^2$.  If $d' \le \compBound$, then for any $r$ there is an ideal $I$ generated by $r$ forms of degree $d$ that has the Hilbert series conjectured by Fr\"oberg, up to degree $d+d'$.
\end{theorem}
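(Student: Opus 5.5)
The plan is to reduce Theorem \ref{thm: low d'} to Theorem \ref{thm: sign-changes}, which needs only two inputs: the hypothesis $\dim_k R_{d+d'} \ge (\dim_k R_{d'})^2$, which we have, and the fact that $g_{d,d'}$ has at most one sign change in its coefficients. Once the coefficient condition is verified for every pair $d>d'$ with $d'\le \compBound$, Theorem \ref{thm: sign-changes} yields the statement in degree $d+d'$. To get the statement \emph{up to} degree $d+d'$, we apply the same argument to every $d''\le d'$ in place of $d'$. The hypothesis transfers: the condition $g_{d,d''}(n-1)\ge 0$ should follow for smaller $d''$ by a monotonicity argument, or else the lower degrees are covered by the same computation together with the fact that Property \eqref{gcase} in degree $d+d''$ is generic. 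We then intersect the finitely many nonempty open sets of good $r$-tuples, as in the proof of Theorem \ref{thm:asymto}. The degrees $\le d+1$ are handled by Hochster--Laksov.

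The sign-change verification is reduced to finitely many $d$ for each fixed $d'$. By Lemma \ref{lemma: coefficients}, the coefficient $c_j(d)$ of $x^j$ is $e_j(1/(d'+1),\dots,1/(d'+d)) - e_j(1,1/2,\dots,1/d')$, and by Remark \ref{remark: increase d, decrease d'} it is nondecreasing in $d$. For $j>d'$ the second term vanishes, so $c_j>0$. The only possibly negative coefficients are therefore $c_1,\dots,c_{d'}$. At most one sign change means exactly that the set of $j$ with $c_j<0$ is an initial segment $\{1,\dots,J\}$.

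Since each $c_j(d)\to\infty$ as $d\to\infty$, because $e_j$ of the harmonic tail diverges, there is a $D(d')$ beyond which all coefficients are nonnegative. That threshold is where $\sum_{i=d'+1}^{d'+d}1/i \ge H_{d'}$, combined with an argument in the spirit of Proposition \ref{prop: sign-change quadratic and linear} that nonnegativity of $c_1$ propagates to higher $c_j$. If the propagation argument fails, I would instead compute $D(d')$ directly as the first $d$ with all $c_j(d)\ge 0$, which is legitimate by monotonicity. For $d'<d<D(d')$ one checks by exact rational arithmetic, using the recursion $e_j(S\cup\{a\})=e_j(S)+a\,e_{j-1}(S)$ for updating as $d$ increases, that the negative coefficients form an initial segment. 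Proposition \ref{prop: negative-coeff-d-1} already gives the case $d=d'+1$.

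The main obstacle is the size and rigour of this finite computation for all $d'\le\compBound$. Since $D(d')$ grows roughly like $(e^{\gamma}-1)d'^{\,2}$ times a constant, there are on the order of millions of polynomials, each with up to $2100$ coefficients. I would organise the computation incrementally in $d$ for fixed $d'$, with interval or exact arithmetic to certify signs. I would also exploit a structural observation, to be verified or used heuristically: as $d$ increases, coefficients switch from negative to positive starting from the top index downward. If that is true, it suffices to track the single index where negativity ends and check that no isolated negative $c_j$ appears above a nonnegative one.
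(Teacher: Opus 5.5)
Your proposal is essentially the paper's proof: a finite computer check that $g_{d,d'}$ has at most one sign change for $d'\le 2100$, followed by Theorem~\ref{thm: sign-changes} for every $d''\le d'$ and an intersection of finitely many nonempty open sets of good $r$-tuples. The check is finite because, by Remark~\ref{remark: increase d, decrease d'}, each coefficient increases with $d$ and eventually all are nonnegative. The monotonicity you hedge on is the other half of that same remark: coefficients decrease in $d'$, so $g_{d,d''}(n-1)\ge g_{d,d'}(n-1)\ge 0$ for $d''\le d'$, and Lemma~\ref{lemma:equiv-} then gives the hypothesis in each lower degree. Your ``or else'' fallback is therefore not needed, and it would not supply that hypothesis anyway.
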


\begin{proof} 
We begin by checking that $g_{d,d'}(x)$ has at most one sign-change in its coefficients when $d'\le \compBound$. From Remark \ref{remark: increase d, decrease d'}, we see that each coefficient of $g_{d,d'}(x)$ is increasing with respect to $d$ and for suffiently large $d$, all coefficients are positive. Hence it is sufficient to check the number sing-changes for a finite number of values $d$ for each value of $d'$. These checks have been done using MATLAB~\cite{MATLAB}.

Let now $d,d'$ and $n$ be fixed integers such that $d>d'$, $d'\le \compBound$ and $\dim_k R_{d+d'} \ge (\dim_k R_{d'})^2$.
 We have that 
$\dim_k R_{d+d'} \ge (\dim_k R_{d'})^2$ is equivalent to $g_{d,d'}(n-1)\ge 0$ by Lemma \ref{lemma:equiv-}, and from Remark \ref{remark: increase d, decrease d'} we see that $g_{d,d'}(n-1)$ is decreasing with respect to $d'$. Thus  $\dim_k R_{d+d''}\ge (\dim_k R_{d''})^2$,  for all $d''\le d'$. Since $g_{d,d''}$ has at most one sign-change in its coefficients, it follows now by Theorem \ref{thm: sign-changes} that for each $d''\le d'$, and any $r$, there is an ideal in $R$ generated by $r$ forms in degree $d$ that has the Hilbert function conjectured by Fr\"oberg in degree $d+d''$. For each $d''\le d'$, having the Hilbert function conjectured by Fr\"oberg in degree $d+d''$ is a generic property for ideals generated by $r$ forms of degree $d$ in $R$. Thus, for all $r$, there exists an ideal $I$ in $R$ generated by  $r$ forms of degree $d$ such that $\dim_k (R/I)_i$ equals the value conjectured by Fr\"oberg for  $i\leq d+d'$.
\end{proof} 

Observe that the bound $d'\le \compBound$ depends only on a numerical calculation, it can easily be extended further with more calculations.  

\section{Discussions} \label{sec:discussions}
Since Aubry \cite{Aubry1995392}, Migliore and Mir\'o-Roig \cite{MIGLIORE200379}, and Nenashev \cite{NENASHEV2017272} have different criteria for when the series is correct in degree $d+d'$, for $d'>1$, we find it relevant to explain the relation between these and our results. Firstly, Theorem~\ref{thm:d2} completely generalize the result by Hochster and Laksov from multiplication by linear forms to multiplication by quadratic forms. 
 Secondly, Theorem \ref{thm:asymto} shows that for ideals generated in degree $d$, the Fr\"oberg conjecture is true up to degree $2d-1$, as long as the number of variables $n$ is sufficiently large. In particular, this settles the conjecture up to degree $2d-1$ in all but finitely many cases, which is not true for any of the results of Aubry \cite{Aubry1995392}, Migliore and Mir\'o-Roig \cite{MIGLIORE200379}, or Nenashev \cite{NENASHEV2017272}. Below, we give a short description for each of these results, and comment briefly on the relation to our work.

Similar to us, Aubry \cite{Aubry1995392} adapted the technique by Hochster and Laksov to handle the case $d+d'$ for some $d'<d$, depending on $d'$ and $n$. This result is given using a function $\delta(n-1,d')$, and states that whenever $d\ge \delta(n-1,d')$, then, for any $r$, there exists an ideal $(f_1,\dots,f_r)$ generated in degree $d$ that has the Hilbert function conjectured by Fr\"oberg in degree $d+d'$. Lemma 5.8 in \cite{Aubry1995392} shows that 
$$
d\ge \delta(n-1,d') \implies \dim_k R_{d+d'} \ge (\dim_k R_{d'})^2.
$$ Thus, for $d'\le \compBound$, our Theorem \ref{thm: low d'} is a direct generalization of the main result of \cite{Aubry1995392}. Moreover for a fixed $d'$ the function $\delta(n-1,d')$ tends to infinity when $n$ does, so in particular it can not be used to obtain any asymptotic result similar to our Theorem \ref{thm:asymto}. Thus, for fixed $d'$ and $d>d$, it leaves infinitely many cases open.

The result by Migliore and Mir\'o-Roig is built on induction using Anick's result for three variables. When $d'<d$, they conclude that ideals generated by $r$ general forms of degree $d$ have the expected Hilbert function in degree $d+d'$ for
\[
r \le \frac{\left(d'+d+1\right)\left(d'+d+2\right)}{\left(d'+1\right)\left(d'+2\right)}.
\]
When $d'=2$, this gives $r\le (d+4)(d+3)/12$. Note that this bound is independent of $n$, but it covers smaller proportion of the relevant cases when $n$ grows.

In \cite{NENASHEV2017272}, Nenashev shows for fixed $d'$ and $d>d'$, that when $r\le \frac{\dim_k R_{d+d'}}{\dim_k R_{d'}} - \dim_k R_{d'}$ or $r\ge \frac{\dim_k R_{d+d'}}{\dim_k R_{d'}} + \dim_k R_{d'}$ an ideal generated by $r$ general forms of degree $d$ have the expected Hilbert function in degree $d+d'$. Thus, covering all but $2\dim_k R_{d'}$ cases. When the number of variables $n$ increases, so does $2\dim_k R_{d'}$, and so also this result leaves infinitely many cases open for fixed $d'$ and $d>d'$. It is important to note that the critical value for $r$, when we expect the forms to span exactly $R_{d+d'}$, is in the center of the interval that is not covered by Nenashev's result. Note moreover that $$\frac{\dim_k R_{d+d'}}{\dim_k R_{d'}} - \dim_k R_{d'} \ge 0 \iff \dim_k R_{d+d'} \ge (\dim_k R_{d'})^2.$$ Thus, when $d'\le \compBound$, the result that the conjecture is true when  $r\le \frac{\dim_k R_{d+d'}}{\dim_k R_{d'}} - \dim_k R_{d'}$ is directly generalized by Theorem \ref{thm: low d'}.

Our results are all given under the condition that $\dim_k R_{d+d'} \ge (\dim_k R_{d'})^2$, so it is natural to ask if this is a limitation of the technique, or a consequence of sub-optimal approximations. We claim, that without any specific information on the irreducible components of $V$, this is (almost) the best one can obtain.
 Indeed, if $\dim_{k}R_{d+d'} < (\dim_k R_{d'})^2$, it follows that  $r< \dim_k R_{d'}$, and hence for any $r$-tuple of forms $(a_1,\dots,a_r)$ in $R_{d'}^{r}$ we have that $\HF_{R/(a_1,\dots,a_r)}(d') >0$ and with no further assumption on $(a_1,\dots,a_r)$ we cannot conclude that $\HF_{R/(a_1,\dots,a_r)}(d+d') =0$. Even for $P_t$, where $t=\dim_k R_d' - r$ and the forms are maximally independent, the best upper bound on $\dim_k V_t$ we can obtain is equal to $r\dim_k R_{d}  +  \HF_{R/(a_1,\dots,a_r)}(d+d') -1$, which is not enough to show that $\dim V_t < r\dim_k R_d$. This means that it is not a collection of approximations that has lead us to the restriction $\dim_{k}R_{d+d'} \ge (\dim_k R_{d'})^2$. Instead,  without any more information about the irreducible components of $V$, it is a necessary condition to prove that $\dim_k V < \dim_k R_d$.

It is important to note that the method used here cannot be used in the case when $d'\ge d$ since in this case, there are Koszul relations, so we cannot prove that the forms spanned by the generators in degree $d+d'$ are linearly independent. To move further in this direction, we believe that a completely different approach is needed.

\subsection*{Acknowledgement}
This work was initiated in connection with a reading seminar on the Fr\"oberg conjecture in Stockholm during the academic year 2024/2025. The authors would like to thank the participants of this seminar, and we are especially grateful to Clas L\"ofwall who presented the proof of the Hochster-Laksov result at the seminar.

The first author was supported by the Swedish Research Council grant VR2024-04853. The second and the third author were supported by the Swedish Research Council grant VR2022-04009.

\bibliographystyle{abbrv}
\bibliography{references.bib}

\end{document}